\newtheorem{thm}{Theorem}
\newtheorem{lemma}{Lemma}
\newtheorem{cor}{Corollary}
\newtheorem*{remark}{Remark}
\title{Associating the Invariant Subspaces of a Non-Normal Matrix with Transient Effects in its Matrix Exponential or Matrix Powers}
\author{Matthew~G.~Reuter\corref{cor1}}
\ead{matthew.reuter@stonybrook.edu}
\address{Department of Applied Mathematics and Statistics \& Institute for Advanced Computational Science, Stony Brook University, Stony Brook, New York 11794, United States}
\begin{document}

\begin{abstract}
It is well known that the matrix exponential of a non-normal matrix can exhibit transient growth even when all eigenvalues of the matrix have negative real part, and similarly for the powers of the matrix when all eigenvalues have magnitude less than 1. Established conditions for the existence of these transient effects depend on properties of the entire matrix, such as the Kreiss constant, and can be laborious to use in practice. In this work we develop a relationship between the invariant subspaces of the matrix and the existence of transient effects in the matrix exponential or matrix powers. Analytical results are obtained for two-dimensional invariant subspaces and Jordan subspaces, with the former causing transient effects when the angle between the subspace's constituent eigenvectors is sufficiently small. In addition to providing a finer-grained understanding of transient effects in terms of specific invariant subspaces, this analysis also enables geometric interpretations for the transient effects.
\end{abstract}

\begin{keyword}
Matrix exponential \sep matrix powers \sep transient behavior \sep non-normal matrices \sep numerical range \\
AMS Codes: 15A16 \sep 
15A60 
\end{keyword}

\maketitle

\section{Introduction}
\label{sec:intro}
The matrix exponential $e^{\mathbf{M}t}$, for $\mathbf{M}\in\mathbb{C}^{N\times N}$ and $t\in\mathbb{R}$ (often $t\ge0$), appears in many contexts \cite{moler-3-2003, bk:trefethen-2005}, notably when solving a system of first-order, linear ordinary differential equations. The matrix powers $\mathbf{M}^n$ ($n\ge0$) also have wide applicability \cite{bk:trefethen-2005}; for instance in Markov processes or in condensed matter physics \cite{reuter-053001-2017}. When $\mathbf{M}$ is normal (\textit{i.e.},\ $\mathbf{M}$ has a complete set of orthogonal eigenvectors), its eigenvalues completely determine the behavior of $e^{\mathbf{M}t}$ and of $\mathbf{M}^n$ \cite{bk:trefethen-2005}. That is, if all eigenvalues of $\mathbf{M}$ have negative real part, then $\|e^{\mathbf{M}t}\|$ decays monotonically with $t$; likewise, $\| \mathbf{M}^n \|$ decays with $n$ when all eigenvalues of $\mathbf{M}$ have magnitude less than 1.

The situation is more complicated when $\mathbf{M}$ is not normal \cite{bk:trefethen-2005}. The eigenvalues of $\mathbf{M}$ correctly predict asymptotic behaviors as $t\to\infty$ or $n\to\infty$, but can qualitatively fail for small and intermediate values of $t$ or $n$. For example, there can be transient growth in $e^{\mathbf{M}t}$ such that $\|e^{\mathbf{M}t}\|>1$ for some $0 < t \ll \infty$ even though all eigenvalues of $\mathbf{M}$ have negative real part. Similar effects can be observed in $\|\mathbf{M}^n\|$. Numerous applications of these transient effects --- \textit{i.e.},\ $\|e^{\mathbf{M}t}\|>1$ or $\| \mathbf{M}^n \|>1$ --- are discussed in \cite{bk:trefethen-2005}. Note that this definition of transience, although taken from \cite{bk:trefethen-2005}, is different from ``transience'' in some applications, where it may mean that the system has not yet reached asymptotic or steady-state conditions. Throughout this work, transience will mean that there is transient growth in the matrix exponential or matrix powers when none would be predicted from the matrix's eigenvalues.

Pseudospectral analyses \cite{bk:trefethen-2005} help predict the existence of transient growth, but have two limitations. First, they rely on quantities that can be laborious to obtain in practice, such as the condition number of the eigenvector matrix of $\mathbf{M}$ or the Kreiss constant of $\mathbf{M}$ (see chapter 14 of \cite{bk:trefethen-2005}). Second, they do not usually provide sharp conditions for when $\mathbf{M}$ will or will not display transient effects. Furthermore, better predictions of transient effects typically require the more computationally intensive properties of $\mathbf{M}$, \textit{e.g.},\ the Kreiss constant.

The goal of this work is to determine more practical conditions for the existence of transient effects in the matrix exponential or matrix powers of $\mathbf{M}$. Our approach differs significantly from previous studies. Rather than relating ``holistic'' properties of $\mathbf{M}$ (such as the Kreiss constant) to transient effects, we examine the $k$-dimensional ($k\ge 2$) invariant subspaces of $\mathbf{M}$ \cite{bk:gohberg-2006} and show that transient effects can be associated with invariant subspaces that satisfy certain conditions. This ultimately provides a more fine-grained explanation of transient effects and potentially introduces a new line of inquiry for characterizing non-normal matrices in terms of their invariant subspaces (see Thm.\ \ref{thm:subspaces}).

The main results are stated in Thms.\ \ref{thm:transient-exp} and \ref{thm:transient-power} for $e^{\mathbf{M}t}$ and $\mathbf{M}^n$, respectively, and are proven in section \ref{sec:higherD}. Most of these conditions come from analytical results for the 2-dimensional invariant subspaces of $\mathbf{M}$. The layout of this paper is as follows. First, section \ref{sec:prelims} introduces our notation and reviews key concepts from matrix analysis. Section \ref{sec:2by2} then derives conditions for transient effects when $\mathbf{M}\in\mathbb{C}^{2\times2}$. These results are generalized to $\mathbf{M}\in\mathbb{C}^{N\times N}$ in section \ref{sec:higherD}, where they apply to the 2-dimensional invariant subspaces of $\mathbf{M}$. Results for higher-dimensional Jordan subspaces are also presented. Additionally, there are several geometric interpretations for these results, which are discussed in section \ref{sec:geom}. We finally offer concluding remarks and ideas for future studies in section \ref{sec:conclusions}.

\begin{thm}[Transient Effects in the Matrix Exponential]
\label{thm:transient-exp}
Let $\mathbf{M}$ be a $N\times N$ complex matrix ($N\ge 2$) with all of its eigenvalues in the open left half of $\mathbb{C}$; that is, $\mathrm{Re}(\lambda)<0$ if $\lambda$ is an eigenvalue of $\mathbf{M}$. Let $S$ be an invariant subspace of $\mathbf{M}$. Then $e^{\mathbf{M}t}$ will have transient effects when at least one of the following conditions is satisfied.
\begin{subequations}
\begin{enumerate}
\item $S$ is a $M$-dimensional ($M\ge2$) Jordan subspace associated with defective eigenvalue $\lambda$ and
\begin{equation}
\mathrm{Re}(\lambda) > -\cos\left( \frac{\pi}{M+1} \right).
\label{eq:defective-lambda-exp}
\end{equation}

\item $S$ is a 2-dimensional subspace spanned by eigenvectors associated with eigenvalues $\lambda_1$ and $\lambda_2$, $\lambda_1\neq\lambda_2$, $\theta\in(0,\pi/2]$ is the angle between the eigenvectors [in the Hermitian sense of Eq.\ \eqref{eq:vector-angle}], and
\begin{equation}
\theta < \arctan\left( \frac{\left| \lambda_1 - \lambda_2 \right|}{2 \sqrt{\mathrm{Re}(\lambda_1) \mathrm{Re}(\lambda_2)}} \right).
\label{eq:theta-exp}
\end{equation}
\end{enumerate}
\end{subequations}
\end{thm}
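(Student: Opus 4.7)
The plan is to reduce the $N$-dimensional statement to an analysis of the restriction $\mathbf{M}|_S$ and then invoke either the $2\times 2$ results of section \ref{sec:2by2} or the known numerical range of the Jordan shift, depending on which hypothesis applies. Let $\mathbf{V}$ be an $N\times k$ matrix whose $k=\dim S$ columns form an orthonormal basis of $S$, and put $\mathbf{B}:=\mathbf{V}^{\ast}\mathbf{M}\mathbf{V}$. Because $S$ is $\mathbf{M}$-invariant one has $\mathbf{M}\mathbf{V}=\mathbf{V}\mathbf{B}$, and hence $e^{\mathbf{M}t}\mathbf{V}=\mathbf{V}e^{\mathbf{B}t}$. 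Since $\mathbf{V}$ is an isometry,
\begin{equation*}
\|e^{\mathbf{B}t}\| \;=\; \|\mathbf{V}\,e^{\mathbf{B}t}\| \;=\; \|e^{\mathbf{M}t}\mathbf{V}\| \;\le\; \|e^{\mathbf{M}t}\|,
\end{equation*}
so producing any $t>0$ with $\|e^{\mathbf{B}t}\|>1$ under either hypothesis is enough to force transient growth of $e^{\mathbf{M}t}$.

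For hypothesis (ii), $\mathbf{B}$ is $2\times 2$ with eigenvalues $\lambda_1\ne\lambda_2$. Taking the columns of $\mathbf{V}$ to be the Gram--Schmidt orthonormalization of the two non-parallel eigenvectors makes $\mathbf{B}$ upper triangular, and the modulus of its off-diagonal entry admits an explicit expression in the angle $\theta$ from Eq.~\eqref{eq:vector-angle}. Substituting this $\mathbf{B}$ into the $2\times 2$ transient-growth criterion already proved in section \ref{sec:2by2} and rearranging yields \eqref{eq:theta-exp}.

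For hypothesis (i), $\mathbf{B}$ is $M\times M$ with $\lambda$ as its sole eigenvalue and a single Jordan block. I would invoke the classical fact that $\|e^{\mathbf{A}t}\|>1$ for small $t>0$ whenever the numerical abscissa $\omega(\mathbf{A}):=\max\{\mathrm{Re}(z):z\in W(\mathbf{A})\}$ is positive, together with the known identity $W(\mathbf{N}_0)=\{z\in\mathbb{C}:|z|\le\cos(\pi/(M+1))\}$ for the $M\times M$ nilpotent shift $\mathbf{N}_0$. Combining these gives $\omega(\lambda\mathbf{I}+\mathbf{N}_0)=\mathrm{Re}(\lambda)+\cos(\pi/(M+1))$, so that \eqref{eq:defective-lambda-exp} is exactly the positivity condition on $\omega(\mathbf{B})$.

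The main obstacle will be hypothesis (ii): showing that after Gram--Schmidt the sole off-diagonal entry of $\mathbf{B}$ is precisely the trigonometric function of $\theta$ needed so that the $2\times 2$ criterion of section \ref{sec:2by2} rearranges into the clean form \eqref{eq:theta-exp}. The Jordan case is comparatively routine once the numerical radius of $\mathbf{N}_0$ is treated as known, the only delicate point being to align $\mathbf{B}$ with the standard block $\lambda\mathbf{I}+\mathbf{N}_0$ before applying the numerical abscissa argument.
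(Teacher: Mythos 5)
Your proposal is correct and follows essentially the same route as the paper: the isometry/compression argument $\|e^{\mathbf{B}t}\|=\|e^{\mathbf{M}t}\mathbf{V}\|\le\|e^{\mathbf{M}t}\|$ is exactly the paper's Thm.~\ref{thm:subspaces} on restrictions to invariant subspaces, condition (ii) is handled by reducing to the upper-triangular $2\times2$ form parameterized by the eigenvector angle and invoking the numerical-abscissa criterion $\omega>0$ as in Lemmas \ref{lemma:num-abscissa-a} and \ref{lemma:2by2:theta-exp}, and condition (i) uses the known disk-shaped numerical range of the Jordan block exactly as in Thm.~\ref{thm:nr-jordan}. The only work you leave implicit --- that the off-diagonal entry of the Schur form has modulus $|\lambda_1-\lambda_2|\cot(\theta)$, from which \eqref{eq:theta-exp} follows by solving $\omega(\mathbf{A})>0$ --- is carried out explicitly in Eqs.~\eqref{eq:matrix-a:a-theta}--\eqref{eq:matrix-a} and Lemma \ref{lemma:num-abscissa-a} of the paper.
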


\begin{thm}[Transient Effects in the Matrix Powers]
\label{thm:transient-power}
Let $\mathbf{M}$ be a nonsingular $N\times N$ complex matrix ($N\ge 2$) with all of its eigenvalues in the open unit circle of $\mathbb{C}$; that is, $0<|\lambda|<1$ if $\lambda$ is an eigenvalue of $\mathbf{M}$. Let $S$ be an invariant subspace of $\mathbf{M}$. Then $\mathbf{M}^n$ will have transient effects when at least one of the following conditions is satisfied.
\begin{enumerate}
\item $S$ is a $M$-dimensional ($M\ge2$) Jordan subspace.

\item $S$ is a 2-dimensional subspace spanned by eigenvectors associated with eigenvalues $\lambda_1$ and $\lambda_2$, $\lambda_1\neq\lambda_2$, $\theta\in(0,\pi/2]$ is the angle between the eigenvectors [in the Hermitian sense of Eq.\ \eqref{eq:vector-angle}], and
\begin{equation}
\theta < \arctan\left( \frac{\left| \ln (\lambda_1) - \ln(\lambda_2) \right|}{2 \sqrt{ \ln |\lambda_1| \ln |\lambda_2| } } \right).
\label{eq:theta-power}
\end{equation}
The branch of $\ln(z)$ can always be chosen to make the imaginary part of the numerator of Eq.\ \eqref{eq:theta-power} be in $[-\pi,\pi)$.
\end{enumerate}
\end{thm}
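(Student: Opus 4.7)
The strategy mirrors that of Theorem~\ref{thm:transient-exp}. The first step is a reduction to the invariant subspace: for any $v \in S$, $\mathbf{M}^n v \in S$ with $\|\mathbf{M}^n v\|_{\mathbb{C}^N} = \|(\mathbf{M}|_S)^n v\|_S$, so $\|\mathbf{M}^n\| \geq \|(\mathbf{M}|_S)^n\|_S$ and it suffices to exhibit some $n \geq 1$ with $\|(\mathbf{M}|_S)^n\|_S > 1$. I will work throughout in an orthonormal basis of $S$ inherited from $\mathbb{C}^N$, so that the operator norm and the induced matrix $2$-norm coincide.

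For condition~1 (Jordan subspace) I would emphasize that, in contrast with the exponential case, no threshold on $\lambda$ is required. In the Jordan chain basis the restriction acts as the canonical block $J_M(\lambda)$, whose powers have entries $(J_M(\lambda)^n)_{jk} = \binom{n}{k-j}\lambda^{n-k+j}$ for $k\ge j$. A direct singular-value calculation on the leading $2\times 2$ corner already shows $\|J_M(\lambda)\|_2 > 1$ whenever $\lambda \neq 0$, so transient growth appears at $n=1$; the point is that the polynomial factor $\binom{n}{k-j}$ in the off-diagonal entries of $J_M(\lambda)^n$ does not need to overcome an exponential decay at the initial step in the way that $t^{k-j}/(k-j)!$ must when competing against $e^{\mathrm{Re}(\lambda)t}$ in $e^{J_M(\lambda)t}$.

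For condition~2 (2D subspace with distinct eigenvalues) I would invoke the $2\times 2$ analysis of section~\ref{sec:2by2}. A Schur decomposition of $\mathbf{M}|_S$ in its orthonormal basis gives $\mathbf{U}^*(\mathbf{M}|_S)\mathbf{U} = \mathbf{R}$ with $\mathbf{R} = \left(\begin{smallmatrix} \lambda_1 & s \\ 0 & \lambda_2 \end{smallmatrix}\right)$, where the key geometric identity $|s| = |\lambda_1 - \lambda_2|\cot\theta$ converts the Hermitian eigenvector angle into a Schur off-diagonal. An explicit computation yields $(\mathbf{R}^n)_{12} = s(\lambda_1^n - \lambda_2^n)/(\lambda_1 - \lambda_2)$, and the requirement $\|\mathbf{R}^n\|_2 > 1$ for some integer $n \geq 1$ becomes an inequality in $\lambda_1$, $\lambda_2$, $\theta$, and $n$ that has to be rearranged into the form of Eq.~\eqref{eq:theta-power}.

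The hard part will be precisely this rearrangement, because $n$ ranges over positive integers rather than continuous $t$. I would handle it via the matrix logarithm: writing $\mathbf{M}|_S = e^{\mathbf{L}|_S}$ for any admissible branch, the eigenvalues $\mu_i = \ln(\lambda_i)$ satisfy $\mathrm{Re}(\mu_i) = \ln|\lambda_i| < 0$, so Theorem~\ref{thm:transient-exp} applies to $\mathbf{L}|_S$, and substituting $\mu_i$ for $\lambda_i$ in Eq.~\eqref{eq:theta-exp} reproduces Eq.~\eqref{eq:theta-power}; the branch prescription in the theorem selects the branch that minimizes $|\mu_1 - \mu_2|$ and thereby yields the sharpest bound. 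The residual subtlety that section~\ref{sec:2by2} must supply is that the interval of $t > 0$ on which $\|e^{t\mathbf{L}|_S}\|_2 > 1$ actually contains a positive integer, so that the continuous transient growth of $e^{t\mathbf{L}|_S}$ transfers to a genuine $\|\mathbf{M}^n|_S\| > 1$ for some $n \geq 1$; I expect this to follow from tracking the peak location of $\|\mathbf{R}^n\|_2$ in the parameters $\lambda_i,\theta$ and checking that Eq.~\eqref{eq:theta-power} is precisely the condition placing this peak at or beyond $n=1$.
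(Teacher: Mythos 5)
Your proposal is correct and, for both the reduction step and condition~2, follows essentially the same route as the paper: your inequality $\|\mathbf{M}^n\|\ge\|(\left.\mathbf{M}\right|_S)^n\|$ is exactly Thm.~\ref{thm:subspaces}, and the paper's Lemma~\ref{lemma:2by2:theta-power} likewise writes $\mathbf{A}^n=e^{n\ln(\mathbf{A})}$ and substitutes $\ln(\lambda_1),\ln(\lambda_2)$ into the exponential criterion of Lemma~\ref{lemma:2by2:theta-exp}, with the same branch prescription (the explicit computation of $(\mathbf{R}^n)_{12}$ you sketch is never needed). The ``residual subtlety'' you flag --- whether the interval of continuous $t$ on which $\|e^{t\ln(\mathbf{A})}\|>1$ contains a positive integer --- is not resolved by the paper either; it is explicitly disclaimed in the remark that Lemmas~\ref{lemma:2by2:theta-power} and \ref{lemma:2by2:defective-power} ``consider $n$ to be continuous,'' so you are not missing anything the paper supplies. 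Where you genuinely diverge is condition~1. The paper computes $\ln(\mathbf{D})$, finds $W(\ln(\mathbf{D}))$ to be a disc of radius $1/(2|\lambda|)$ centered at $\ln(\lambda)$, and checks $\omega(\ln(\mathbf{D}))=\ln|\lambda|+1/(2|\lambda|)>0$, which again only guarantees growth for continuous $n$ near $0^+$. Your singular-value bound on the leading $2\times2$ corner of the Jordan block, giving $\bigl\|J_M(\lambda)\bigr\|_2>1$ for all $\lambda\neq0$, is more elementary and strictly stronger: it exhibits a transient effect at the integer $n=1$ and covers all $M\ge2$ in one stroke, whereas the paper treats only $M=2$ explicitly and leaves the passage to larger $M$ implicit. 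One caution, which applies equally to the paper's Lemma~\ref{lemma:2by2:defective-power}: both arguments identify the restriction of $\mathbf{M}$ to a Jordan subspace, written in an orthonormal basis, with the canonical block having unit superdiagonal, but orthogonalizing a Jordan chain generally yields $\left[\begin{smallmatrix}\lambda & a\\ 0 & \lambda\end{smallmatrix}\right]$ with $|a|$ not equal to $1$, so your computation (like the paper's) strictly proves the claim for the normalized block $\mathbf{D}$ of Eq.~\eqref{eq:matrix-d} rather than for an arbitrary Jordan subspace; since you inherit this convention from the paper rather than introduce it, I do not count it as a gap in your argument.
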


\section{Preliminaries and Notation}
\label{sec:prelims}
Several standard results from matrix analysis will be needed for the discussion and proofs of Thms.\ \ref{thm:transient-exp} and \ref{thm:transient-power}. We overview them in this section, which also introduces our notation. Throughout this section, we assume $\mathbf{M}\in\mathbb{C}^{N\times N}$. The angle between two vectors will be calculated in the Hermitian sense \cite{galantai-589-2006} with
\begin{equation}
\cos(\theta) = \frac{\left|v_1 \cdot{} v_2\right|}{\left\| v_1 \right\| \left\| v_2 \right\|}
\label{eq:vector-angle}
\end{equation}
such that $\theta\in[0,\pi/2]$.

\subsection{Schur Decomposition, Unitary Invariance, and Matrix Norm}
\label{sec:prelims:schur}
Every $\mathbf{M}$ is unitarily similar to an upper triangular matrix; that is, $\mathbf{M} = \mathbf{QTQ}^{-1}$, where $\mathbf{Q}$ is unitary and $\mathbf{T}$ is upper triangular. $\mathbf{T}$ is called a Schur form of $\mathbf{M}$ \cite{bk:golub-2013}. It is trivial to show that $\mathbf{M}^n=\mathbf{QT}^n\mathbf{Q}^{-1}$ and, by use of the matrix exponential's Taylor series, $e^{\mathbf{M}t}=\mathbf{Q}e^{\mathbf{T}t}\mathbf{Q}^{-1}$ \cite{bk:higham-2008}. Then, assuming the 2-norm or the Frobenius norm for the matrix norm, we get that $\|e^{\mathbf{M}t}\| = \|e^{\mathbf{T}t}\|$ and $\|\mathbf{M}^n\|=\|\mathbf{T}^n\|$ because these matrix norms are invariant to unitary transformations \cite{bk:golub-2013}. It is therefore sufficient to consider a Schur form of $\mathbf{M}$ in our analyses.

\subsection{Invariant Subspaces and Restrictions}
\label{sec:prelims:subspaces}
A subspace $S\subseteq\mathbb{C}^N$ is an invariant subspace of $\mathbf{M}$ if $\mathbf{M}x\in S$ for every $x\in S$ \cite{bk:gohberg-2006}. The invariant subspaces of $\mathbf{M}$ are closely tied to the Jordan decomposition of $\mathbf{M}$; every invariant subspace of $\mathbf{M}$ is spanned by a set of eigenvectors and generalized eigenvectors of $\mathbf{M}$. In this way, each invariant subspace of $\mathbf{M}$ can be associated with eigenvalues of $\mathbf{M}$. An invariant subspace of $\mathbf{M}$ spanned by eigenvectors and generalized eigenvectors that all belong to the same Jordan chain is called a Jordan subspace of $\mathbf{M}$. A 2- or higher-dimensional Jordan subspace of $\mathbf{M}$ is thus associated with a sole defective eigenvalue of $\mathbf{M}$.

We will also need the restriction of $\mathbf{M}$ onto an invariant subspace $S$, denoted $\left.\mathbf{M}\right|_S:S\to S$ and defined by
\[
\left.\mathbf{M}\right|_S x = \mathbf{M}x
\]
for $x\in S$. Thus, $\left.\mathbf{M}\right|_S$ can be regarded as a $\mathrm{dim}(S)\times\mathrm{dim}(S)$ matrix. If $f$ is a well-behaved function, such as the exponential, then the Jordan form of $\mathbf{M}$ and its relation to $S$ shows that \cite{bk:higham-2008}
\[
\left.f(\mathbf{M})\right|_S = f\left( \left. \mathbf{M} \right|_S \right).
\]

\subsection{Numerical Range and Numerical Abscissa}
\label{sec:prelimes:nr}
The numerical range of $\mathbf{M}$ is the compact, convex subset of $\mathbb{C}$ given by \cite{bk:horn-1991}
\begin{equation}
W(\mathbf{M}) = \left\{ x \cdot{} \mathbf{M} x : x\in\mathbb{C}^N, \| x \| = 1 \right\}.
\label{eq:numerical-range}
\end{equation}
The numerical abscissa,
\begin{equation}
\omega(\mathbf{M}) = \max \left\{\mathrm{Re}(z) : z\in W(\mathbf{M}) \right\},
\label{eq:numerical-abscissa}
\end{equation}
comes from the boundary of $W(\mathbf{M})$ and relates to transient effects in $e^{\mathbf{M}t}$ \cite{bk:trefethen-2005}. An example of the numerical range and numerical abscissa is displayed in Figure \ref{fig:nr-ellipse}.

\begin{figure}
\centering
\resizebox{3.25in}{!}{\includegraphics{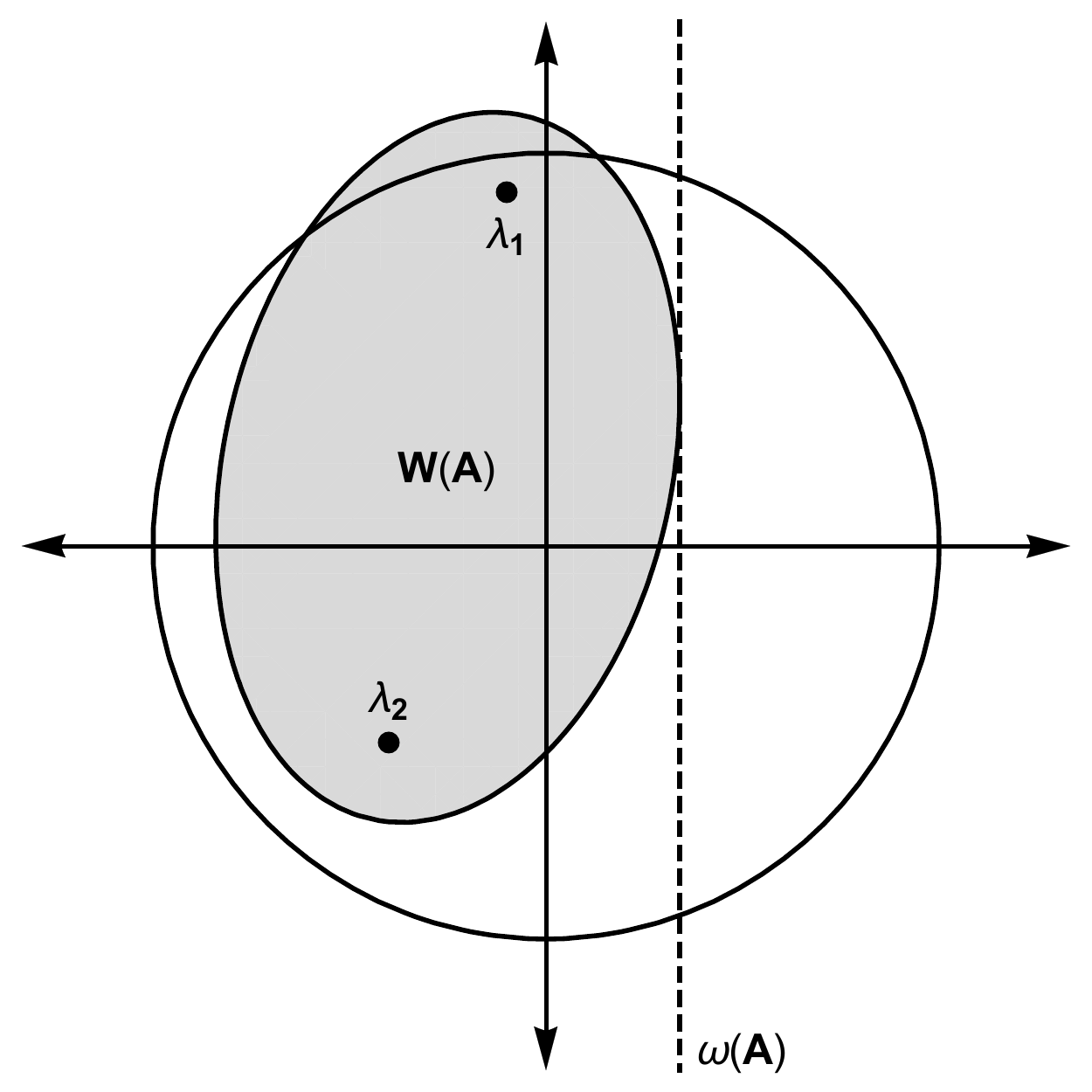}}
\caption{\label{fig:nr-ellipse}An example of the numerical range (shaded region) for $\mathbf{A}$ in Eq.\ \eqref{eq:matrix-a} with $\lambda_1=-0.1+0.9i$, $\lambda_2=-0.4-0.5i$, and $\theta=2\pi/7$. The real axis, imaginary axis, and unit circle are shown as solid lines, for reference. The dashed vertical line depicts $\mathrm{Re}(z)=\omega(\mathbf{A})$, the numerical abscissa of $\mathbf{A}$. Despite both eigenvalues being in the open left half of $\mathbb{C}$ (open unit circle), the numerical range extends into the right half plane (outside the unit circle), which signifies transient effects in the matrix exponential (matrix powers).}
\end{figure}

\subsection{Transient Effects in the Matrix Exponential and Matrix Powers}
\label{sec:prelims:transients}
Transient effects in the matrix exponential $e^{\mathbf{M}t}$ occur when all eigenvalues of $\mathbf{M}$ have negative real part and $\|e^{\mathbf{M}t}\| > 1$ for some $t>0$. Clearly, $\| e^{\mathbf{M}t} \|=1$ when $t=0$ such that a sufficient condition for transient behavior is
\[
\left. \frac{\mathrm{d}}{\mathrm{d}t} \left\| e^{\mathbf{M}t} \right\| \right|_{t=0^+} > 0.
\]
From Eq.\ (14.2) in \cite{bk:trefethen-2005},
\[
\left. \frac{\mathrm{d}}{\mathrm{d}t} \left\| e^{\mathbf{M}t} \right\| \right|_{t=0^+} = \omega(\mathbf{M}),
\]
Thus, a sufficient condition for the existence of transient effects in $e^{\mathbf{M}t}$ is
\begin{equation}
\omega(\mathbf{M}) > 0.
\label{eq:exist-transient-exp}
\end{equation}
Transient effects in the matrix exponential are consequently related to the boundary of $W(\mathbf{M})$.

Other bounds for predicting the existence of transient effects in $e^{\mathbf{M}t}$ can be found in chapter 14 of \cite{bk:trefethen-2005}, and involve (for example) the pseudospectra of $\mathbf{M}$ or the Kreiss constant of $\mathbf{M}$. These quantities are ``holistic'', meaning they examine $\mathbf{M}$ in its entirety. Our analyses in sections \ref{sec:2by2} and \ref{sec:higherD} will look at the relationship between specific invariant subspaces of $\mathbf{M}$ and transient effects, thereby providing a more fine-grained analysis of transient effects.

Transient effects in $\mathbf{M}^n$ are similar. In this case the eigenvalues of $\mathbf{M}$ are less than one in magnitude and $\| \mathbf{M}^n \|>1$ for some $n>0$. As before, a discussion of transient effects in the matrix powers can be found in chapter 14 of \cite{bk:trefethen-2005}, which includes bounds that depend on holistic properties of $\mathbf{M}$.

\section{Analytical Results on $2\times 2$ Matrices}
\label{sec:2by2}
Many of the conditions in Thms.\ \ref{thm:transient-exp} and \ref{thm:transient-power} come from analytical results on transient effects in $2\times2$ matrices. This section develops these results. First in section \ref{sec:2by2:nr} we discuss the numerical range of $2\times 2$ matrices \cite{uhlig-541-1985, bk:horn-1991}. Section \ref{sec:2by2:parameterize} then parameterizes $\mathbb{C}^{2\times 2}$ in a way that facilitates our analyses. The key parameter is the angle between the matrix's eigenvectors (assuming the matrix is not defective). Finally, sections \ref{sec:2by2:exp} and \ref{sec:2by2:power} derive sufficient conditions for the matrix exponential and matrix powers of a $2\times 2$ matrix, respectively, to display transient behavior.

\subsection{The Numerical Range}
\label{sec:2by2:nr}
The numerical range of a $2\times 2$ matrix $\mathbf{M}$ is a (possibly-degenerate) ellipse with boundary given by Thm.\ \ref{thm:nr-ellipse} \cite{uhlig-541-1985, bk:horn-1991}. The center of the ellipse is $\mathrm{tr}(\mathbf{M})/2$, the eigenvalues of $\mathbf{M}$ are the foci of the ellipse, and the major and minor axes are stated in Lemma \ref{lemma:axes-ellipse}. Many of the ensuing results will come from analyzing the geometry of this ellipse, for which an example is displayed in Figure \ref{fig:nr-ellipse}.

\begin{thm}[Boundary of the Numerical Range of a $2\times2$ Matrix, Taken from Thm.\ 2 in \cite{uhlig-541-1985}] 
In the complex plane $\mathbb{C}\simeq \mathbb{R}^2$, the boundary curve of the numerical range of $\mathbf{M}\in\mathbb{C}^{2\times2}$ is
\begin{equation}
\left[ x - \mathrm{Re}\left( \frac{\mathrm{tr}(\mathbf{M})}{2} \right), y - \mathrm{Im}\left( \frac{\mathrm{tr}(\mathbf{M})}{2} \right) \right] \mathbf{S} \left[ \begin{array}{c} x - \mathrm{Re}\left( \frac{\mathrm{tr}(\mathbf{M})}{2} \right) \\ y - \mathrm{Im}\left( \frac{\mathrm{tr}(\mathbf{M})}{2} \right) \end{array} \right] = \frac{\det(\mathbf{S})}{4},
\label{eq:nr-ellipse}
\end{equation}
where $\left[ \begin{array}{c} x \\ y \end{array} \right]\in\mathbb{R}^2$,
\[
\mathbf{S} = \left[ \begin{array}{cc}
\| \mathbf{M}_0 \|_\mathrm{F}^2 + 2 \, \mathrm{Re}(\det(\mathbf{M}_0)) & 2 \, \mathrm{Im}(\det(\mathbf{M}_0)) \\
2 \, \mathrm{Im}(\det(\mathbf{M}_0)) & \| \mathbf{M}_0 \|_\mathrm{F}^2 - 2 \, \mathrm{Re}(\det(\mathbf{M}_0))
\end{array} \right],
\]
$\mathbf{M}_0 = \mathbf{M}-(\mathrm{tr}(\mathbf{M})/2)\mathbf{I}$, and $\|\cdot\|_\mathrm{F}$ is the Frobenius norm.
\label{thm:nr-ellipse}
\end{thm}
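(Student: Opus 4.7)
The plan is to reduce to a canonical form and then verify the quadratic-form equation by direct calculation. Since $W(\mathbf{M}+c\mathbf{I}) = W(\mathbf{M}) + c$ for any scalar $c$, translating by $-\mathrm{tr}(\mathbf{M})/2$ centers the numerical range at the origin. This shift is already built into Eq.~\eqref{eq:nr-ellipse}, whose coordinates and right-hand side are both expressed in terms of $\mathbf{M}_0$. It therefore suffices to prove the statement assuming $\mathrm{tr}(\mathbf{M})=0$, so that $\mathbf{M}=\mathbf{M}_0$.

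Next I would use the Schur decomposition $\mathbf{M}_0 = \mathbf{Q}\mathbf{T}\mathbf{Q}^*$. The numerical range is unitarily invariant, and both $\|\mathbf{M}_0\|_\mathrm{F}^2$ and $\det(\mathbf{M}_0)$ are preserved under unitary similarity, so $\mathbf{S}$ is unchanged by this replacement. Thus I may assume
\[
\mathbf{M}_0 = \begin{pmatrix} \mu & c \\ 0 & -\mu \end{pmatrix}
\]
with eigenvalues $\pm\mu$ and off-diagonal entry $c\in\mathbb{C}$.

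Parameterizing a general unit vector as $x = (\cos\phi,\, e^{i\psi}\sin\phi)^T$, a direct computation gives
\[
x^* \mathbf{M}_0 x = \mu\cos(2\phi) + \tfrac{c}{2}\sin(2\phi)\,e^{i\psi}.
\]
For each fixed $\phi$, varying $\psi$ sweeps out a circle of radius $(|c|/2)|\sin(2\phi)|$ centered at $\mu\cos(2\phi)$, and the union over $\phi$ fills the numerical range. I would then identify this union with the elliptical disc having foci $\pm\mu$, semi-minor axis $|c|/2$, and semi-major axis $\sqrt{|\mu|^2+|c|^2/4}$, either by computing the support function of the set in each direction of $\mathbb{R}^2$ or by verifying the sum-of-distances-to-foci condition on the envelope of the family of circles.

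Finally, to match Eq.~\eqref{eq:nr-ellipse}, I would rotate the plane by $\arg(\mu)$ so the foci lie on the real axis, write the ellipse in standard form, and rotate back by $\mu = \mu_1 + i\mu_2$ to obtain a quadratic form in the original $(x,y)$. Substituting $\det(\mathbf{M}_0) = -\mu^2$ and $\|\mathbf{M}_0\|_\mathrm{F}^2 = 2|\mu|^2 + |c|^2$ into the definition of $\mathbf{S}$ gives a $2\times 2$ matrix whose entries I expect to coincide with those of the rotated quadratic form and whose determinant equals $|c|^2(4|\mu|^2+|c|^2)$, yielding the right-hand side $\det(\mathbf{S})/4$. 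The main obstacle is the identification in the third step: showing that the parameterized set is exactly the filled ellipse with the claimed foci and axes (the classical elliptical range / Murnaghan theorem); once that geometric fact is in hand, the remaining verifications are routine linear algebra and trigonometry.
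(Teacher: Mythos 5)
The paper does not prove this theorem at all --- it is imported verbatim from Uhlig's Theorem 2 (reference \cite{uhlig-541-1985}) and used as a black box --- so there is no internal proof to compare against. Your proposal is essentially the classical proof of the elliptical range theorem, and it is sound. The reductions are legitimate: $W(\mathbf{M}+c\mathbf{I})=W(\mathbf{M})+c$, and $W$, $\|\cdot\|_\mathrm{F}$, and $\det$ are all invariant under unitary similarity, so passing to a trace-free Schur form $\mathbf{M}_0=\left[\begin{smallmatrix}\mu & c\\ 0 & -\mu\end{smallmatrix}\right]$ loses nothing. Your computation $x^{*}\mathbf{M}_0x=\mu\cos(2\phi)+\tfrac{c}{2}e^{i\psi}\sin(2\phi)$ is correct, and the final verification does go through: with $\det(\mathbf{M}_0)=-\mu^2$ and $\|\mathbf{M}_0\|_\mathrm{F}^2=2|\mu|^2+|c|^2$, the matrix $\mathbf{S}$ has eigenvalues $|c|^2$ and $4|\mu|^2+|c|^2$ with the small-eigenvalue eigenvector along $(\mathrm{Re}\,\mu,\mathrm{Im}\,\mu)$, so $v^{T}\mathbf{S}v=\det(\mathbf{S})/4=|c|^2(4|\mu|^2+|c|^2)/4$ is exactly the ellipse with semi-axes $\sqrt{|\mu|^2+|c|^2/4}$ and $|c|/2$ oriented along the foci $\pm\mu$. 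The one step you correctly flag as nontrivial --- that the union of the circles of radius $\tfrac{|c|}{2}\sqrt{1-t^2}$ centered at $t\mu$, $t\in[-1,1]$, is precisely that filled ellipse --- closes cleanly: a point $p$ lies on one of these circles iff the quadratic $g(t)=|p-t\mu|^2-\tfrac{|c|^2}{4}(1-t^2)$ has a root in $[-1,1]$, and since $g(\pm1)\ge0$ while the minimum of $g$ occurs at $|t^{*}|\le1$ with $\min_t g\le 0$ exactly on the filled ellipse, the intermediate value theorem finishes it. Two minor caveats: the statement as written also covers degenerate cases ($c=0$ gives a segment, $\mu=0$ a disc), where Eq.\ \eqref{eq:nr-ellipse} degenerates and your argument should be read as a limiting case; and note that your (correct) major-axis length $\sqrt{4|\mu|^2+|c|^2}$ is half the quantity \eqref{eq:nr-major-axis} quoted in Lemma \ref{lemma:axes-ellipse}, a normalization discrepancy in the paper's quotation of Uhlig rather than an error in your argument.
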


\begin{lemma}[Major and Minor Axes of the Numerical Range of a $2\times 2$ Matrix, Taken from Thm.\ 1 in \cite{uhlig-541-1985}]
The major and minor axes of the elliptical numerical range of $\mathbf{M}\in\mathbb{C}^{2\times 2}$ are
\begin{subequations}
\begin{equation}
\label{eq:nr-major-axis}
2\sqrt{\left\| \mathbf{M}_0 \right\|_\mathrm{F}^2 + 2 \left| \mathrm{det}(\mathbf{M}_0) \right|}
\end{equation}
and
\begin{equation}
\label{eq:nr-minor-axis}
2\sqrt{\left\| \mathbf{M}_0 \right\|_\mathrm{F}^2 - 2 \left| \mathrm{det}(\mathbf{M}_0) \right|},
\end{equation}
\end{subequations}
respectively, where $\mathbf{M}_0 = \mathbf{M}-(\mathrm{tr}(\mathbf{M})/2)\mathbf{I}$, and $\|\cdot\|_\mathrm{F}$ is the Frobenius norm.
\label{lemma:axes-ellipse}
\end{lemma}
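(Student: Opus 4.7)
The plan is to reduce the lemma to a direct spectral diagonalization of the real symmetric $2\times 2$ matrix $\mathbf{S}$ appearing in Thm.\ \ref{thm:nr-ellipse}. Because that theorem has already located the ellipse's center at $\mathrm{tr}(\mathbf{M})/2$, only a rigid rotation of coordinates remains, and after that rotation the axis lengths can be read off from the ratio of the right-hand side $\det(\mathbf{S})/4$ to the eigenvalues of $\mathbf{S}$.

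The first step is to compute the spectrum of $\mathbf{S}$. Abbreviating $A = \|\mathbf{M}_0\|_\mathrm{F}^2$, $B = 2\,\mathrm{Re}(\det \mathbf{M}_0)$, and $C = 2\,\mathrm{Im}(\det \mathbf{M}_0)$, one reads from the definition of $\mathbf{S}$ that $\mathrm{tr}(\mathbf{S}) = 2A$ and $\det(\mathbf{S}) = A^2 - B^2 - C^2 = \|\mathbf{M}_0\|_\mathrm{F}^4 - 4|\det \mathbf{M}_0|^2$, since $B^2 + C^2 = 4|\det\mathbf{M}_0|^2$. The $2\times 2$ quadratic formula then gives
\[
\lambda_\pm(\mathbf{S}) \;=\; A \pm \sqrt{B^2 + C^2} \;=\; \|\mathbf{M}_0\|_\mathrm{F}^2 \;\pm\; 2\,|\det \mathbf{M}_0|,
\]
which are precisely the expressions appearing under the square roots in Eqs.\ \eqref{eq:nr-major-axis} and \eqref{eq:nr-minor-axis}.

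The second step is to apply the spectral theorem to rotate into an orthonormal eigenbasis of $\mathbf{S}$. In the rotated coordinates $(\tilde u,\tilde v)$, Eq.\ \eqref{eq:nr-ellipse} takes the diagonal form $\lambda_+(\mathbf{S})\,\tilde u^2 + \lambda_-(\mathbf{S})\,\tilde v^2 = \det(\mathbf{S})/4$. Using the factorization $\det(\mathbf{S}) = \lambda_+(\mathbf{S})\,\lambda_-(\mathbf{S})$ and dividing through puts the boundary into standard-form, from which the lengths of the principal axes can be read off directly in terms of $\lambda_\pm(\mathbf{S})$.

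The argument is entirely elementary; the main obstacle is bookkeeping rather than insight. I would need to track factors of $2$ and $4$ carefully so that the final result matches Eqs.\ \eqref{eq:nr-major-axis}--\eqref{eq:nr-minor-axis} exactly (including whether ``axis'' denotes full or semi-axis length in Uhlig's convention), and I would want to verify that the degenerate case $\lambda_-(\mathbf{S}) = 0$, equivalent to $\|\mathbf{M}_0\|_\mathrm{F}^2 = 2|\det\mathbf{M}_0|$ and occurring precisely when $\mathbf{M}$ is normal, is handled gracefully: here the ``ellipse'' collapses to a line segment whose length matches the major axis formula while the minor axis formula correctly returns zero.
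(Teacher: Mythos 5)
The paper gives no proof of this lemma at all---it is imported verbatim from Uhlig's Theorem~1---so your derivation from Thm.~\ref{thm:nr-ellipse} is a genuine addition rather than a rephrasing of anything in the text. Your spectral step is correct: with $A=\|\mathbf{M}_0\|_\mathrm{F}^2$, $B=2\,\mathrm{Re}(\det\mathbf{M}_0)$, $C=2\,\mathrm{Im}(\det\mathbf{M}_0)$ one indeed has $\lambda_\pm(\mathbf{S})=A\pm\sqrt{B^2+C^2}=\|\mathbf{M}_0\|_\mathrm{F}^2\pm 2\left|\det\mathbf{M}_0\right|$ and $\det(\mathbf{S})=\lambda_+(\mathbf{S})\,\lambda_-(\mathbf{S})$, and rotating to the eigenbasis puts the boundary into the standard form
\[
\frac{\tilde u^2}{\lambda_-(\mathbf{S})/4}+\frac{\tilde v^2}{\lambda_+(\mathbf{S})/4}=1 .
\]
Your remark that $\lambda_-(\mathbf{S})=0$ exactly when $\mathbf{M}$ is normal, with the ellipse degenerating to the segment joining the eigenvalues, is also right.

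The difficulty is that the bookkeeping you defer is precisely where the argument fails to reach the statement as written. Reading the axes off the display above gives semi-axes $\tfrac{1}{2}\sqrt{\lambda_\pm(\mathbf{S})}$, hence \emph{full} axis lengths $\sqrt{\|\mathbf{M}_0\|_\mathrm{F}^2\pm 2\left|\det\mathbf{M}_0\right|}$; under neither the full-axis nor the semi-axis convention does the leading factor of $2$ in Eqs.~\eqref{eq:nr-major-axis}--\eqref{eq:nr-minor-axis} emerge. A concrete check: for the Jordan block $\mathbf{D}$ of Eq.~\eqref{eq:matrix-d} one has $\|\mathbf{M}_0\|_\mathrm{F}^2=1$ and $\det(\mathbf{M}_0)=0$, so both stated formulas evaluate to $2$, whereas $W(\mathbf{D})$ is the disk of radius $1/2$ (consistent with Thm.~\ref{thm:nr-ellipse} and Lemma~\ref{lemma:num-abscissa-d}), all of whose diameters have length $1$. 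So either Uhlig normalizes $\mathbf{M}_0$ differently from Thm.~\ref{thm:nr-ellipse}, or the lemma as transcribed carries a spurious factor of $2$; in either case your route proves the statement only after that factor is removed, and no amount of careful tracking of $2$'s and $4$'s will recover it from Thm.~\ref{thm:nr-ellipse} as given. (The discrepancy is harmless downstream, since Lemma~\ref{lemma:axes-a} is used only through the \emph{ratio} of the two axes, but a proof of the lemma as literally stated cannot be completed along your lines.)
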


\subsection{Parameterizing $2\times 2$ Non-Normal Matrices}
\label{sec:2by2:parameterize}
Before developing the results for $2\times 2$ matrices, we first need to parameterize $\mathbb{C}^{2\times 2}$ using quantities that will facilitate our analysis. Appealing to the Jordan form, there are three cases.

First is when the matrix is diagonalizable and has a degenerate eigenvalue. In this case the matrix is a scalar multiple of the identity matrix; it is normal and not of interest to this discussion. We will not consider this case further.

Second is when the matrix is diagonalizable but with distinct eigenvalues, $\lambda_1$ and $\lambda_2$. We will denote this matrix by $\mathbf{A}$. From section \ref{sec:prelims:schur} we can assume without loss of generality that $\mathbf{A}$ is upper triangular:
\[
\mathbf{A} = \left[ \begin{array}{cc}
\lambda_1 & a \\
0 & \lambda_2
\end{array} \right],
\]
where $a,\lambda_1,\lambda_2\in\mathbb{C}$ and $\lambda_1\neq\lambda_2$. It is straightforward to see that $\left[ \begin{array}{cc} 1 & 0 \end{array} \right]^T$ is a right eigenvector associated with $\lambda_1$ and that $\left[ \begin{array}{cc} a & \lambda_2-\lambda_1 \end{array} \right]^T$ is a right eigenvector associated with $\lambda_2$. Then, the angle between the eigenvectors [Eq.\ \eqref{eq:vector-angle}] is
\begin{equation}
\cos(\theta) = \frac{|a|}{\sqrt{|a|^2+|\lambda_2-\lambda_1|^2}},
\label{eq:matrix-a:a-theta}
\end{equation}
which yields
\[
|a| = |\lambda_2-\lambda_1|\cot(\theta).
\]
Then, for $0\le\varphi<2\pi$, we get
\begin{equation}
\mathbf{A} = \left[ \begin{array}{cc}
\lambda_1 & e^{i\varphi} \left|\lambda_2-\lambda_1\right| \cot(\theta) \\
0 & \lambda_2
\end{array} \right].
\label{eq:matrix-a}
\end{equation}
The angle between the eigenvectors can thus be used to parameterize the non-normality of $\mathbf{A}$. This idea of examining the angle between eigenvectors comes from the condition number of an eigenvalue \cite{bk:heath-2002}, which essentially considers the angle between the eigenvalue's left and right eigenvectors.

Third is when the matrix is defective, with sole eigenvalue $\lambda$. This matrix will be called $\mathbf{D}$. The Schur form is the $2\times2$ Jordan block because the generalized eigenvector can always be chosen to be orthogonal to the eigenvector. Thus,
\begin{equation}
\mathbf{D} = \left[ \begin{array}{cc}
\lambda & 1 \\
0 & \lambda
\end{array} \right].
\label{eq:matrix-d}
\end{equation}

Some useful properties of $\mathbf{A}$ and $\mathbf{D}$ are stated in the following Lemmas.

\begin{lemma}
The major and minor axes of $W(\mathbf{A})$ are
\begin{subequations}
\begin{equation}
2 \left| \lambda_1 - \lambda_2 \right| \csc(\theta)
\label{eq:major-axis-a}
\end{equation}
and
\begin{equation}
2 \left| \lambda_1 - \lambda_2 \right| \cot(\theta),
\label{eq:minor-axis-a}
\end{equation}
\end{subequations}
respectively.
\label{lemma:axes-a}
\end{lemma}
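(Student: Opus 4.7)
The plan is to invoke Lemma \ref{lemma:axes-ellipse} directly, since $\mathbf{A}$ is a concrete $2\times 2$ matrix given by Eq.\ \eqref{eq:matrix-a}. All that is needed is to evaluate the two ingredients appearing there: $\|\mathbf{A}_0\|_\mathrm{F}^2$ and $|\det(\mathbf{A}_0)|$, where $\mathbf{A}_0 = \mathbf{A} - (\mathrm{tr}(\mathbf{A})/2)\mathbf{I}$.

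First I would form $\mathbf{A}_0$ by subtracting $(\lambda_1+\lambda_2)/2$ from each diagonal entry. This yields an upper triangular matrix with diagonal entries $\pm(\lambda_1-\lambda_2)/2$ and off-diagonal entry $e^{i\varphi}|\lambda_2-\lambda_1|\cot(\theta)$. Since the diagonal entries are negatives of each other, $\det(\mathbf{A}_0) = -(\lambda_1-\lambda_2)^2/4$, giving $|\det(\mathbf{A}_0)| = |\lambda_1-\lambda_2|^2/4$. Next, summing the squared magnitudes of the three nonzero entries gives $\|\mathbf{A}_0\|_\mathrm{F}^2 = |\lambda_1-\lambda_2|^2/2 + |\lambda_1-\lambda_2|^2\cot^2(\theta)$; the phase $e^{i\varphi}$ plays no role.

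Substituting into Eqs.\ \eqref{eq:nr-major-axis} and \eqref{eq:nr-minor-axis}, the quantity under the square root for the major axis becomes $|\lambda_1-\lambda_2|^2\bigl(1+\cot^2(\theta)\bigr) = |\lambda_1-\lambda_2|^2\csc^2(\theta)$, yielding Eq.\ \eqref{eq:major-axis-a} after using $\sin(\theta)>0$ on $(0,\pi/2]$. For the minor axis, the $|\lambda_1-\lambda_2|^2/2$ contribution from $\|\mathbf{A}_0\|_\mathrm{F}^2$ cancels exactly against $2|\det(\mathbf{A}_0)| = |\lambda_1-\lambda_2|^2/2$, leaving $|\lambda_1-\lambda_2|^2\cot^2(\theta)$ under the square root, which is Eq.\ \eqref{eq:minor-axis-a}.

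There is no real obstacle here; the result is essentially a direct calculation. The only thing to keep track of is that $\theta\in(0,\pi/2]$ is assumed (the normal case $\theta=\pi/2$ is allowed but yields a degenerate ellipse, namely a line segment of length $2|\lambda_1-\lambda_2|$ between the two eigenvalues, consistent with Eqs.\ \eqref{eq:major-axis-a}--\eqref{eq:minor-axis-a}), and that the off-diagonal phase $e^{i\varphi}$ drops out as soon as one takes magnitudes, confirming that the shape of $W(\mathbf{A})$ depends only on $\lambda_1$, $\lambda_2$, and $\theta$.
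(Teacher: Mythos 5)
Your computation is correct and is exactly the route the paper takes: its proof simply says the result follows from Eq.\ \eqref{eq:matrix-a} and Lemma \ref{lemma:axes-ellipse}, and you have filled in the straightforward evaluation of $\|\mathbf{A}_0\|_\mathrm{F}^2$ and $|\det(\mathbf{A}_0)|$ correctly, including the cancellation that gives the minor axis. No issues.
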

\begin{proof}
The proof is trivial following from Eq.\ \eqref{eq:matrix-a} and Lemma \ref{lemma:axes-ellipse}.
\end{proof}

\begin{lemma}
The numerical abscissa of $\mathbf{A}$ is
\begin{equation}
\omega(\mathbf{A}) = \frac{\sqrt{\left| \lambda_1 - \lambda_2 \right|^2 \left( 1+2\cot^2(\theta) \right) + \mathrm{Re}\left[ (\lambda_1-\lambda_2)^2 \right]}}{2\sqrt{2}} + \frac{\mathrm{Re}(\lambda_1 + \lambda_2)}{2}.
\label{eq:num-abscissa-a}
\end{equation}
\label{lemma:num-abscissa-a}
\end{lemma}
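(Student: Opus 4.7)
The plan is to read off $\omega(\mathbf{A})$ directly from the elliptical description of $W(\mathbf{A})$ provided by Thm.\ \ref{thm:nr-ellipse}. Since the numerical range is the compact convex region bounded by the ellipse of Eq.\ \eqref{eq:nr-ellipse}, the numerical abscissa is simply the largest value of the horizontal coordinate attained on this ellipse. That largest value decomposes as (i) the horizontal coordinate of the ellipse's center, which is $\mathrm{Re}(\mathrm{tr}(\mathbf{A})/2) = \mathrm{Re}(\lambda_1+\lambda_2)/2$, plus (ii) the half-width of the ellipse's shadow on the real axis.

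For step (ii), I would translate the ellipse to be centered at the origin and apply the standard support-function argument: for a positive-definite quadratic form $\mathbf{v}^{T}\mathbf{S}\mathbf{v} = \det(\mathbf{S})/4$, a one-line Lagrange-multiplier calculation (or equivalently the formula $h(\mathbf{u}) = \sqrt{(\det(\mathbf{S})/4)\,\mathbf{u}^{T}\mathbf{S}^{-1}\mathbf{u}}$ with $\mathbf{u}=(1,0)^{T}$) yields $x_{\max} = \tfrac{1}{2}\sqrt{s_{22}}$, where $s_{22} = \|\mathbf{A}_0\|_{\mathrm{F}}^{2} - 2\,\mathrm{Re}(\det(\mathbf{A}_0))$ is the lower-right entry of $\mathbf{S}$ from Thm.\ \ref{thm:nr-ellipse}. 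Adding the center contribution gives
\[
\omega(\mathbf{A}) = \frac{\sqrt{\|\mathbf{A}_0\|_{\mathrm{F}}^{2} - 2\,\mathrm{Re}(\det(\mathbf{A}_0))}}{2} + \frac{\mathrm{Re}(\lambda_1+\lambda_2)}{2}.
\]

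It then remains to compute $\|\mathbf{A}_0\|_{\mathrm{F}}^{2}$ and $\det(\mathbf{A}_0)$ explicitly from the parameterization in Eq.\ \eqref{eq:matrix-a}. Since $\mathbf{A}_0 = \mathbf{A} - \tfrac{\lambda_1+\lambda_2}{2}\mathbf{I}$ has diagonal entries $\pm(\lambda_1-\lambda_2)/2$ and off-diagonal entry $e^{i\varphi}|\lambda_1-\lambda_2|\cot(\theta)$, I would quickly get $\|\mathbf{A}_0\|_{\mathrm{F}}^{2} = \tfrac{1}{2}|\lambda_1-\lambda_2|^{2} + |\lambda_1-\lambda_2|^{2}\cot^{2}(\theta)$ and $\det(\mathbf{A}_0) = -(\lambda_1-\lambda_2)^{2}/4$. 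Substituting these into the expression above and factoring $\tfrac{1}{2}$ out of the radicand produces exactly Eq.\ \eqref{eq:num-abscissa-a}.

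The only nontrivial step is the support-function/Lagrange-multiplier computation, and even that collapses because the specific right-hand side $\det(\mathbf{S})/4$ cancels the $\det(\mathbf{S})$ in $(\mathbf{S}^{-1})_{11}$; everything else is bookkeeping with the entries of $\mathbf{A}_0$. In particular, notice that the phase $e^{i\varphi}$ and the parameter $|a|$ enter only through $|a|^{2} = |\lambda_1-\lambda_2|^{2}\cot^{2}(\theta)$, so $\omega(\mathbf{A})$ is independent of $\varphi$, as it must be by unitary invariance via Schur.
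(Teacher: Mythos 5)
Your proposal is correct and follows essentially the same route as the paper: both obtain $\mathbf{S}$ from Thm.\ \ref{thm:nr-ellipse} for the parameterization \eqref{eq:matrix-a} and then maximize the real part over the elliptical boundary (the paper via Lagrange multipliers, you via the equivalent support-function identity). Your observation that the constraint value $\det(\mathbf{S})/4$ cancels against $(\mathbf{S}^{-1})_{11}$ to leave $x_{\max}=\tfrac12\sqrt{s_{22}}$ is a clean way to organize the computation the paper leaves implicit, and your entries for $\|\mathbf{A}_0\|_{\mathrm{F}}^2$ and $\det(\mathbf{A}_0)$ check out against Eq.\ \eqref{eq:num-abscissa-a}.
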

\begin{proof}
$\mathbf{S}$ in Thm.\ \ref{thm:nr-ellipse} is
\[
\mathbf{S} = \left[ \begin{array}{cc}
\left| \lambda_1 - \lambda_2 \right|^2 \left( \frac{1}{2} + \cot^2(\theta) \right) - \frac{1}{2} \mathrm{Re}\left[ \left(\lambda_1-\lambda_2\right)^2 \right] & - \frac{1}{2} \mathrm{Im}\left[ \left(\lambda_1-\lambda_2\right)^2 \right] \\
- \frac{1}{2} \mathrm{Im}\left[ \left(\lambda_1-\lambda_2\right)^2 \right] & \left| \lambda_1 - \lambda_2 \right|^2 \left( \frac{1}{2} + \cot^2(\theta) \right) + \frac{1}{2} \mathrm{Re}\left[ \left(\lambda_1-\lambda_2\right)^2 \right]
\end{array} \right].
\]
Then, recast Eq.\ \eqref{eq:numerical-abscissa} as a constrained optimization problem: Maximize $x$ subject to Eq.\ \eqref{eq:nr-ellipse}. This can be solved using standard techniques, \textit{e.g.},\ Lagrange multipliers, which produces the result.
\end{proof}

\begin{lemma}
\label{lemma:num-abscissa-d}
The numerical abscissa of $\mathbf{D}$ is $\omega(\mathbf{D}) = \mathrm{Re}(\lambda) + 1/2$.
\end{lemma}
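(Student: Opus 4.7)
The plan is to apply Thm.\ \ref{thm:nr-ellipse} directly to $\mathbf{D}$ and show that its numerical range is a disk of radius $1/2$ centered at $\lambda$, from which the claim follows immediately.

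First I would compute the auxiliary matrix $\mathbf{M}_0 = \mathbf{D} - (\mathrm{tr}(\mathbf{D})/2)\mathbf{I}$. Since $\mathrm{tr}(\mathbf{D})/2 = \lambda$, this yields
\[
\mathbf{M}_0 = \begin{pmatrix} 0 & 1 \\ 0 & 0 \end{pmatrix},
\]
so that $\|\mathbf{M}_0\|_\mathrm{F}^2 = 1$ and $\det(\mathbf{M}_0) = 0$. Substituting these values into the definition of $\mathbf{S}$ in Thm.\ \ref{thm:nr-ellipse} gives $\mathbf{S} = \mathbf{I}$ and $\det(\mathbf{S})/4 = 1/4$, so the boundary curve of $W(\mathbf{D})$ reduces to
\[
\bigl( x - \mathrm{Re}(\lambda) \bigr)^2 + \bigl( y - \mathrm{Im}(\lambda) \bigr)^2 = \tfrac{1}{4}.
\]
Equivalently, Lemma \ref{lemma:axes-ellipse} gives both axes equal to $1$, confirming that $W(\mathbf{D})$ is a closed disk of radius $1/2$ centered at $\lambda$.

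Finally, I would invoke the definition in Eq.\ \eqref{eq:numerical-abscissa}: the numerical abscissa is the largest real part of any point in $W(\mathbf{D})$, which for a disk of radius $1/2$ centered at $\lambda$ is clearly attained at $\lambda + 1/2$, yielding $\omega(\mathbf{D}) = \mathrm{Re}(\lambda) + 1/2$. There is no real obstacle here; the result is an immediate corollary of Thm.\ \ref{thm:nr-ellipse} once the trace and Frobenius norm of the Jordan block are computed, and no constrained optimization (as was needed in Lemma \ref{lemma:num-abscissa-a}) is required because the numerical range degenerates to a circular disk.
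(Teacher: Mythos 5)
Your proposal is correct and follows essentially the same route as the paper: compute $\mathbf{M}_0$, observe that $\mathbf{S}=\mathbf{I}$ in Thm.\ \ref{thm:nr-ellipse}, identify the boundary of $W(\mathbf{D})$ as the circle of radius $1/2$ centered at $\lambda$, and read off the rightmost point. One small caveat: your parenthetical cross-check via Lemma \ref{lemma:axes-ellipse} would actually return $2\sqrt{\|\mathbf{M}_0\|_\mathrm{F}^2}=2$ rather than $1$ from the formula as printed, so that aside does not quite corroborate the radius-$1/2$ disk, but it is inessential since the boundary-curve computation already establishes the result.
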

\begin{proof}
The proof is very similar to that of Lemma \ref{lemma:num-abscissa-a}. The matrix $\mathbf{S}$ from Thm.\ \ref{thm:nr-ellipse} is the identity matrix such that the boundary of $W(\mathbf{D})$ is
\[
\left(x-\mathrm{Re}(\lambda) \right)^2 + \left( y - \mathrm{Im}(\lambda) \right)^2 = \frac{1}{4}.
\]
The right-most point of the boundary curve is trivially $(\mathrm{Re}(\lambda)+1/2,\mathrm{Im}(\lambda))$; thus, $\omega(\mathbf{D}) = \mathrm{Re}(\lambda)+1/2$. Note that a circular numerical range is expected for nilpotent operators \cite{karaev-2321-2004} and $W(\mathbf{D})$ having a radius of $1/2$ is consistent with \cite{haagerup-371-1992, wu-351-1998}; see Thm.\ \ref{thm:nr-jordan} below.
\end{proof}

\subsection{Matrix Exponential ($2\times 2$ Matrices)}
\label{sec:2by2:exp}
As discussed in section \ref{sec:prelims:transients}, $e^{\mathbf{M}t}$ has transient behavior when $\omega(\mathbf{M}) > 0$. We can then examine the non-defective matrix $\mathbf{A}$ in Eq.\ \eqref{eq:matrix-a} and the defective matrix $\mathbf{D}$ in Eq.\ \eqref{eq:matrix-d} using this condition.

\begin{lemma}
\label{lemma:2by2:theta-exp}
Let $\mathbf{A}$ be given as in Eq.\ \eqref{eq:matrix-a} with $\lambda_1\neq\lambda_2$ and $\mathrm{Re}(\lambda_1),\mathrm{Re}(\lambda_2)<0$. Then, $e^{\mathbf{A}t}$ will have transient effects when
\[
\theta < \arctan\left( \frac{\left| \lambda_1 - \lambda_2 \right|}{2 \sqrt{\mathrm{Re}(\lambda_1) \mathrm{Re}(\lambda_2)}} \right).
\]
\end{lemma}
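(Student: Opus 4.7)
The plan is to apply the sufficient condition $\omega(\mathbf{A})>0$ from Eq.\ \eqref{eq:exist-transient-exp} together with the closed-form expression for $\omega(\mathbf{A})$ already established in Lemma \ref{lemma:num-abscissa-a}. Since both eigenvalues are in the open left half-plane, $\mathrm{Re}(\lambda_1+\lambda_2)<0$, which puts us in the pleasant situation where the constant term of $\omega(\mathbf{A})$ is negative and the square-root term is non-negative. The entire argument is therefore expected to reduce to an algebraic manipulation in which we isolate the square root, square both sides, and simplify.

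First I would rewrite the inequality $\omega(\mathbf{A})>0$ as
\[
\sqrt{|\lambda_1-\lambda_2|^2\bigl(1+2\cot^2(\theta)\bigr)+\mathrm{Re}\bigl[(\lambda_1-\lambda_2)^2\bigr]} > -\sqrt{2}\,\mathrm{Re}(\lambda_1+\lambda_2),
\]
noting that the right-hand side is strictly positive by hypothesis, so squaring is an equivalence. This yields
\[
|\lambda_1-\lambda_2|^2\bigl(1+2\cot^2(\theta)\bigr)+\mathrm{Re}\bigl[(\lambda_1-\lambda_2)^2\bigr] > 2\bigl[\mathrm{Re}(\lambda_1+\lambda_2)\bigr]^2.
\]

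The next step is to invoke the elementary identity $|z|^2+\mathrm{Re}(z^2)=2[\mathrm{Re}(z)]^2$ (verified by writing $z=a+bi$) with $z=\lambda_1-\lambda_2$, which collapses the left-hand side to $2[\mathrm{Re}(\lambda_1-\lambda_2)]^2 + 2|\lambda_1-\lambda_2|^2\cot^2(\theta)$. The inequality then becomes
\[
|\lambda_1-\lambda_2|^2\cot^2(\theta) > \bigl[\mathrm{Re}(\lambda_1+\lambda_2)\bigr]^2 - \bigl[\mathrm{Re}(\lambda_1-\lambda_2)\bigr]^2 = 4\,\mathrm{Re}(\lambda_1)\,\mathrm{Re}(\lambda_2),
\]
where the final equality is just the factoring $(r_1+r_2)^2-(r_1-r_2)^2=4r_1r_2$. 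Because $\mathrm{Re}(\lambda_1)\mathrm{Re}(\lambda_2)>0$ and $\cot(\theta)\ge 0$ on $\theta\in(0,\pi/2]$, taking positive square roots and rearranging gives $\cot(\theta) > 2\sqrt{\mathrm{Re}(\lambda_1)\mathrm{Re}(\lambda_2)}/|\lambda_1-\lambda_2|$, which is equivalent to the desired bound on $\theta$ by the monotonicity of $\arctan$.

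There is no serious conceptual obstacle here — the real work was done in Lemma \ref{lemma:num-abscissa-a}. The only points requiring care are the justification that both sides are non-negative before squaring (supplied by the hypothesis $\mathrm{Re}(\lambda_1),\mathrm{Re}(\lambda_2)<0$ and by $\mathrm{Re}(\lambda_1)\mathrm{Re}(\lambda_2)>0$), and the deployment of the identity $|z|^2+\mathrm{Re}(z^2)=2[\mathrm{Re}(z)]^2$ at the right moment to make the two $|\lambda_1-\lambda_2|^2$ contributions telescope with the $\mathrm{Re}(\lambda_1+\lambda_2)$ term into a product of real parts.
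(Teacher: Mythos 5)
Your proposal is correct and follows exactly the route the paper takes: its proof of this lemma is the one-line instruction to use Lemma \ref{lemma:num-abscissa-a} and solve $\omega(\mathbf{A})>0$ for $\theta$, and your argument simply carries out that algebra (the sign checks before squaring and the identity $|z|^2+\mathrm{Re}(z^2)=2[\mathrm{Re}(z)]^2$ are the right details to supply). No gaps.
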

\begin{proof}
Use Lemma \ref{lemma:num-abscissa-a} to solve $\omega(\mathbf{A}) > 0$ for $\theta$.
\end{proof}

\begin{cor}
\label{cor:2by2:a-theta}
Let $\mathbf{M}\in\mathbb{C}^{2\times2}$ have the form (Schur form)
\[
\mathbf{M} = \left[ \begin{array}{cc}
\lambda_1 & a \\
0 & \lambda_2 \\
\end{array} \right],
\]
with $\lambda_1\neq\lambda_2$ and $\mathrm{Re}(\lambda_1),\mathrm{Re}(\lambda_2)<0$. Then, $e^{\mathbf{M}t}$ will have transient effects when $|
a|>2\sqrt{\mathrm{Re}(\lambda_1)\mathrm{Re}(\lambda_2)}$.
\end{cor}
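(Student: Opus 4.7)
The plan is to directly translate Lemma \ref{lemma:2by2:theta-exp}, which is phrased in terms of the eigenvector angle $\theta$, into the equivalent condition on $|a|$. Because the matrix $\mathbf{M}$ is already in the Schur form of Eq.\ \eqref{eq:matrix-a}, the parameterization connecting $|a|$ and $\theta$ is exactly Eq.\ \eqref{eq:matrix-a:a-theta}, so no new calculation is required beyond algebraic inversion of that relation.

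First I would note that under the hypotheses $\lambda_1\ne\lambda_2$ and $\mathrm{Re}(\lambda_1),\mathrm{Re}(\lambda_2)<0$, the matrix $\mathbf{M}$ fits the setting of Lemma \ref{lemma:2by2:theta-exp}: writing $a = |a|e^{i\varphi}$, we obtain precisely the matrix $\mathbf{A}$ of Eq.\ \eqref{eq:matrix-a} with $|a| = |\lambda_1-\lambda_2|\cot(\theta)$, i.e.\ $\tan(\theta) = |\lambda_1-\lambda_2|/|a|$. We may assume $a\ne 0$, since otherwise $\mathbf{M}$ is normal and the claimed inequality $|a|>2\sqrt{\mathrm{Re}(\lambda_1)\mathrm{Re}(\lambda_2)}$ cannot hold (the right-hand side being strictly positive by hypothesis); thus $\theta\in(0,\pi/2]$, consistent with Eq.\ \eqref{eq:vector-angle}.

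Next I would apply Lemma \ref{lemma:2by2:theta-exp}. Since both $\theta$ and the argument of $\arctan$ on the right-hand side of that lemma lie in $(0,\pi/2)$, taking $\tan$ of both sides preserves the inequality, yielding
\[
\frac{|\lambda_1-\lambda_2|}{|a|} \;=\; \tan(\theta) \;<\; \frac{|\lambda_1-\lambda_2|}{2\sqrt{\mathrm{Re}(\lambda_1)\mathrm{Re}(\lambda_2)}}.
\]
Note that $\mathrm{Re}(\lambda_1)\mathrm{Re}(\lambda_2)>0$ under the hypotheses, so the square root is a positive real number. Cancelling the positive factor $|\lambda_1-\lambda_2|$ and taking reciprocals (reversing the inequality) gives $|a|>2\sqrt{\mathrm{Re}(\lambda_1)\mathrm{Re}(\lambda_2)}$, which is the desired sufficient condition for transient effects in $e^{\mathbf{M}t}$.

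I do not anticipate any substantive obstacle; the corollary is a direct reformulation of the preceding lemma under the parameterization of Eq.\ \eqref{eq:matrix-a}. The only points needing care are checking that both sides of the $\arctan$ inequality lie in the principal branch (so that $\tan$ is monotonic there) and verifying that the positivity of $\mathrm{Re}(\lambda_1)\mathrm{Re}(\lambda_2)$ makes the square root unambiguous.
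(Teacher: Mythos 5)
Your proposal is correct and follows essentially the same route as the paper's own proof: it inverts the relation of Eq.\ \eqref{eq:matrix-a:a-theta} to get $\tan(\theta)=|\lambda_1-\lambda_2|/|a|$ and then applies Lemma \ref{lemma:2by2:theta-exp}. The extra care you take with the $a=0$ case and the monotonicity of $\tan$ on $(0,\pi/2)$ is a welcome elaboration of details the paper leaves implicit.
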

\begin{proof}
The earlier discussion [Eq.\ \eqref{eq:matrix-a:a-theta}] relates $a$ to $\theta$, the angle between the eigenvectors of $\mathbf{M}$. Using trigonometry,
\[
\theta = \arctan\left( \frac{|\lambda_2-\lambda_1|}{|a|} \right),
\]
which provides the result when combined with Lemma \ref{lemma:2by2:theta-exp}.
\end{proof}

\begin{lemma}
\label{lemma:2by2:defective-exp}
Let $\mathbf{D}$ be given as in Eq.\ \eqref{eq:matrix-d} with $\mathrm{Re}(\lambda)<0$. Then, $e^{\mathbf{D}t}$ will have transient effects when $\mathrm{Re}(\lambda)>-1/2$.
\end{lemma}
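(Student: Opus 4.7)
The plan is to follow exactly the same template as Lemma \ref{lemma:2by2:theta-exp}, but apply it to the defective matrix $\mathbf{D}$ rather than to $\mathbf{A}$. The relevant ingredients are already in place in the excerpt: section \ref{sec:prelims:transients} tells us that a sufficient condition for transient growth in $e^{\mathbf{D}t}$ is $\omega(\mathbf{D}) > 0$, and Lemma \ref{lemma:num-abscissa-d} supplies a closed-form expression for the numerical abscissa, namely $\omega(\mathbf{D}) = \mathrm{Re}(\lambda) + 1/2$.

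Given these inputs, the proof reduces to a one-line inequality manipulation. First, I would invoke Eq.\ \eqref{eq:exist-transient-exp} to assert that it suffices to show $\omega(\mathbf{D})>0$. Then I would substitute the expression from Lemma \ref{lemma:num-abscissa-d} and solve for $\mathrm{Re}(\lambda)$, obtaining the stated condition $\mathrm{Re}(\lambda) > -1/2$. No additional calculation is required, and there is no real obstacle --- the heavy lifting was done in Lemma \ref{lemma:num-abscissa-d}, where the circular shape of $W(\mathbf{D})$ (radius $1/2$, centered at $\lambda$) was established from Thm.\ \ref{thm:nr-ellipse}.

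If anything, the only small subtlety worth a brief comment is confirming that the hypothesis $\mathrm{Re}(\lambda)<0$ is compatible with the conclusion: the admissible window is $-1/2 < \mathrm{Re}(\lambda) < 0$, which is nonempty, so the lemma is substantive rather than vacuous. This also gives the nice geometric picture that transience arises precisely when the disk $W(\mathbf{D})$ of radius $1/2$ centered at $\lambda$ protrudes into the open right half-plane, parallel to the geometric reasoning used for $\mathbf{A}$ in the preceding results.
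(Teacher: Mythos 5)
Your proposal is correct and matches the paper's own proof, which likewise applies the sufficient condition $\omega(\mathbf{D})>0$ from Eq.\ \eqref{eq:exist-transient-exp} together with $\omega(\mathbf{D})=\mathrm{Re}(\lambda)+1/2$ from Lemma \ref{lemma:num-abscissa-d} and solves for $\mathrm{Re}(\lambda)$. No gaps; the extra remark about the nonempty window $-1/2<\mathrm{Re}(\lambda)<0$ is a harmless bonus.
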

\begin{proof}
Use Lemma \ref{lemma:num-abscissa-d} to solve $\omega(\mathbf{D}) > 0$ for $\mathrm{Re}(\lambda)$.
\end{proof}

\subsection{Matrix Powers ($2\times 2$ Matrices)}
\label{sec:2by2:power}
Let us now turn our attention to matrix powers $\mathbf{M}^n$. Because the existence of $\mathbf{M}^n$ is complicated for singular $\mathbf{M}$, we will restrict our attention to nonsingular $\mathbf{M}$. The results here follow straightforwardly from Lemmas \ref{lemma:2by2:theta-exp} and \ref{lemma:2by2:defective-exp} for the matrix exponential.

\begin{lemma}
\label{lemma:2by2:theta-power}
Let $\mathbf{A}$ be nonsingular and given as in Eq.\ \eqref{eq:matrix-a} with $\lambda_1\neq\lambda_2$ and $0<|\lambda_1|,|\lambda_2|<1$. Then, $\mathbf{A}^n$ will have transient effects when
\[
\theta < \arctan\left( \frac{\left| \ln (\lambda_1) - \ln(\lambda_2) \right|}{2 \sqrt{ \ln |\lambda_1| \ln |\lambda_2| } } \right).
\]
\end{lemma}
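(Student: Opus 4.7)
The plan is to reduce this statement to Lemma \ref{lemma:2by2:theta-exp} by passing to the matrix logarithm, exploiting the identity $\mathbf{A}^n = e^{n \ln \mathbf{A}}$.

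First, since $\mathbf{A}$ is nonsingular and diagonalizable (both eigenvalues are nonzero and distinct), a matrix logarithm $\mathbf{B} = \ln \mathbf{A}$ exists. I would pick a branch of $\ln$ for each eigenvalue so that $\operatorname{Im}(\ln \lambda_1 - \ln \lambda_2) \in [-\pi, \pi)$, which is always possible and matches the stipulation in Thm.\ \ref{thm:transient-power}; this choice ensures $\ln \lambda_1 \ne \ln \lambda_2$, so $\mathbf{B}$ also has distinct eigenvalues.

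Next, because $\mathbf{A}$ and $\mathbf{B}$ are diagonalized by the same similarity transformation, they share eigenvectors and so the angle $\theta$ is the same for both matrices. A direct calculation starting from Eq.\ \eqref{eq:matrix-a} confirms this: the Schur form of $\mathbf{B}$ works out to
\[
\mathbf{B} = \begin{pmatrix} \ln \lambda_1 & a \, (\ln \lambda_1 - \ln \lambda_2)/(\lambda_1 - \lambda_2) \\ 0 & \ln \lambda_2 \end{pmatrix},
\]
and substituting its off-diagonal entry into Eq.\ \eqref{eq:matrix-a:a-theta} recovers the same $\theta$.

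With $\mathbf{B}$ in hand, I would verify the hypotheses of Lemma \ref{lemma:2by2:theta-exp}: its eigenvalues are distinct and have $\operatorname{Re}(\ln \lambda_i) = \ln |\lambda_i| < 0$ because $|\lambda_i| < 1$ (which also makes the denominator $2\sqrt{\ln|\lambda_1|\ln|\lambda_2|}$ real and positive). Applying Lemma \ref{lemma:2by2:theta-exp} to $\mathbf{B}$ then yields exactly Eq.\ \eqref{eq:theta-power} as the condition for transient effects in $e^{\mathbf{B}t}$. Since $\mathbf{A}^n = e^{n \mathbf{B}}$, transient effects in $e^{\mathbf{B}t}$ translate directly into $\|\mathbf{A}^n\| > 1$ for some $n > 0$.

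The main obstacle is mostly bookkeeping: selecting the branch of $\ln$ consistently so that the numerator in Eq.\ \eqref{eq:theta-power} is uniquely defined, and verifying that the eigenvector angle is invariant under the matrix logarithm. A subtler point is bridging the gap between transient effects in $e^{\mathbf{B}t}$ (continuous $t > 0$) and transient effects in $\mathbf{A}^n$; I would resolve this by interpreting $\mathbf{A}^n$ as a function of a continuous parameter $n > 0$ defined through $e^{n \ln \mathbf{A}}$, which is consistent with how transient effects in matrix powers are measured in the pseudospectral framework invoked earlier in the paper.
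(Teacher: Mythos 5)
Your proposal is correct and follows essentially the same route as the paper's proof: reduce to Lemma \ref{lemma:2by2:theta-exp} via $\mathbf{A}^n = e^{n\ln(\mathbf{A})}$, with $\ln(\lambda_1)$ and $\ln(\lambda_2)$ as the new eigenvalues and the branch chosen so that $\mathrm{Im}(\ln\lambda_1 - \ln\lambda_2) \in [-\pi,\pi)$. Your explicit verification that the eigenvector angle $\theta$ is unchanged under the logarithm, and your remark about treating $n$ as continuous, simply spell out details the paper leaves implicit (the latter is acknowledged in the paper's remark following the lemma).
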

\begin{proof}
Because $\mathbf{A}$ is not singular, we can write $\mathbf{A}^n=e^{n\ln(\mathbf{A})}$ by choosing the logarithmic branch cut to avoid both eigenvalues \cite{bk:higham-2008}. The result then trivially follows from Lemma \ref{lemma:2by2:theta-exp} by using $\ln(\lambda_1)$ and $\ln(\lambda_2)$ as eigenvalues. Note that the logarithmic branch cut can always be chosen to make the imaginary part of $\ln(\lambda_1)-\ln(\lambda_2)$ be in $[-\pi,\pi)$.
\end{proof}

\begin{lemma}
\label{lemma:2by2:defective-power}
Let $\mathbf{D}$ be nonsingular and given as in Eq.\ \eqref{eq:matrix-d} with $0<|\lambda|<1$. Then, $\mathbf{D}^n$ will have transient effects.
\end{lemma}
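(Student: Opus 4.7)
The plan is to prove transience by direct computation, exhibiting a single $n$ for which $\|\mathbf{D}^n\|>1$; as it turns out, $n=1$ suffices.

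First, I would invoke the definition from section \ref{sec:prelims:transients}: transient effects require only that $\|\mathbf{D}^n\|>1$ for some positive integer $n$, so it is enough to find one such $n$. Second, I would consider $n=1$ and show $\|\mathbf{D}\|>1$ directly. In the Frobenius norm, $\|\mathbf{D}\|_\mathrm{F}^2 = 2|\lambda|^2 + 1$, which is strictly greater than 1 because $|\lambda|>0$ by the nonsingularity hypothesis. In the 2-norm, one can test on the unit vector $(0,1)^T$: since $\mathbf{D}(0,1)^T = (1,\lambda)^T$ has 2-norm $\sqrt{1+|\lambda|^2}>1$, we conclude $\|\mathbf{D}\|_2>1$. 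Either norm choice yields the same conclusion, consistent with the discussion in section \ref{sec:prelims:schur}. Third, I would package these observations into the statement that transient effects occur at $n=1$.

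There is no real obstacle. Unlike the matrix-exponential case (Lemma \ref{lemma:2by2:defective-exp}), where transience required $\omega(\mathbf{D})>0$ and hence $\mathrm{Re}(\lambda)>-1/2$, the discrete nature of matrix powers lets us simply sample at $n=1$ and read off a norm strictly greater than unity from the Jordan block's off-diagonal entry of magnitude 1. One could alternatively attempt the $\exp$--$\ln$ route $\mathbf{D}^n = e^{n\ln(\mathbf{D})}$ used in Lemma \ref{lemma:2by2:theta-power}, but $\ln(\mathbf{D}) = \ln(\lambda)\mathbf{I} + \lambda^{-1}\mathbf{N}$ (with $\mathbf{N}$ the nilpotent part of $\mathbf{D}$) is not in the form of Eq.\ \eqref{eq:matrix-d}, so the reduction to Lemma \ref{lemma:2by2:defective-exp} is not immediate; the direct evaluation at $n=1$ is cleaner and, moreover, shows transience for every admissible $\lambda$ without any restriction beyond $|\lambda|>0$.
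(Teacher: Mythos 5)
Your proof is correct, but it takes a genuinely different route from the paper. The paper follows the same template as Lemmas \ref{lemma:num-abscissa-d} and \ref{lemma:2by2:theta-power}: it writes $\mathbf{D}^n = e^{n\ln(\mathbf{D})}$, computes $\ln(\mathbf{D})$ explicitly, finds that $W(\ln(\mathbf{D}))$ is a disk of radius $1/(2|\lambda|)$ centered at $\ln(\lambda)$, and checks that $\omega(\ln(\mathbf{D})) = \ln|\lambda| + 1/(2|\lambda|) > 0$ for all $0<|\lambda|<1$; transience then follows from the positive initial slope of $\|e^{n\ln(\mathbf{D})}\|$ at $n=0^+$. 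Your argument bypasses the numerical-range machinery entirely and simply evaluates $\|\mathbf{D}\|$ at $n=1$, which exceeds $1$ in either the 2-norm or the Frobenius norm because of the unit off-diagonal entry (with $|\lambda|>0$ needed for strict inequality in the 2-norm, as you note). Your route is more elementary and in one respect stronger: the paper explicitly cautions after these lemmas that its argument treats $n$ as continuous, so that ``transient effects may not be noticeable when $n$ is restricted to positive integers,'' whereas your computation exhibits the effect at the integer $n=1$ and removes that caveat for this lemma. What the paper's approach buys in exchange is uniformity --- the same $\omega(\cdot)>0$ criterion drives every result in sections \ref{sec:2by2:exp} and \ref{sec:2by2:power} --- and quantitative information (the growth rate $\ln|\lambda|+1/(2|\lambda|)$ at $n=0^+$) that a single norm evaluation does not provide.
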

\begin{proof}
The proof is similar to that of Lemmas \ref{lemma:num-abscissa-d} and \ref{lemma:2by2:theta-power}. Because $\lambda\neq0$, $\mathbf{D}^n=e^{n\ln(\mathbf{D})}$. It can be shown that \cite{bk:higham-2008}
\[
\ln(\mathbf{D}) = \left[ \begin{array}{cc} \ln(\lambda) & \lambda^{-1} \\ 0 & \ln(\lambda) \end{array} \right],
\]
which leads to $\mathbf{S} = \mathbf{I}/|\lambda|^2$ in Thm.\ \ref{thm:nr-ellipse}. Noting that $\mathrm{Re}(\ln(\lambda)) = \ln|\lambda|$, the boundary of $W(\ln(\mathbf{D}))$ is $\left(x-\ln|\lambda|\right)^2 + \left( y - \mathrm{arg}(\lambda) \right)^2 = 1/(4|\lambda|^2)$. Thus, 
\[
\omega(\ln(\mathbf{D}))=\ln|\lambda| + \frac{1}{2|\lambda|}.
\]
One can verify that $\omega(\ln(\mathbf{D}))>0$ for $0<|\lambda|<1$, implying that there will always be transient effects.
\end{proof}

Lemmas \ref{lemma:2by2:theta-exp}, \ref{lemma:2by2:defective-exp}, \ref{lemma:2by2:theta-power}, and \ref{lemma:2by2:defective-power} collectively summarize sufficient conditions for $\mathbf{M}\in\mathbb{C}^{2\times 2}$ to have transient effects in its matrix exponential or matrix powers ($\mathbf{M}$ nonsingular assumed for the matrix powers). Note that Lemmas \ref{lemma:2by2:theta-power} and \ref{lemma:2by2:defective-power} consider $n$ to be continuous such that transient effects may not be noticeable when $n$ is restricted to positive integers.

\begin{remark}
Although not proven, numerical experiments suggest that the conditions in Lemmas \ref{lemma:2by2:theta-exp}, \ref{lemma:2by2:defective-exp}, and \ref{lemma:2by2:theta-power} are also necessary for transient effects in $e^{\mathbf{M}t}$ or $\mathbf{M}^n$ when $\mathbf{M}\in\mathbb{C}^{2\times 2}$. Future work should be performed to confirm or refute this conjecture.
\end{remark}

\section{Higher-Dimensional Matrices (Proofs of Thms.\ \ref{thm:transient-exp} and \ref{thm:transient-power})}
\label{sec:higherD}
The conditions for transient effects in $2\times 2$ matrices that we developed in the previous section readily generalize to $N\times N$ matrices by considering the 2-dimensional invariant subspaces of $\mathbf{M}\in\mathbb{C}^{N\times N}$. Theorem \ref{thm:subspaces} states the relationship.

\begin{thm}
Let $\mathbf{M}\in\mathbb{C}^{N \times N}$ and let $S\subseteq\mathbb{C}^N$ be an invariant subspace of $\mathbf{M}$. If $e^{\left.\mathbf{M}\right|_S t}$ admits transient behavior, then so will $e^{\mathbf{M}t}$. A similar statement holds for $\left( \left. \mathbf{M} \right|_S \right)^n$ and $\mathbf{M}^n$.
\label{thm:subspaces}
\end{thm}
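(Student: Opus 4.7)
The plan is to exploit the functional calculus identity $f(\mathbf{M})|_S = f(\mathbf{M}|_S)$ recalled in section \ref{sec:prelims:subspaces}, together with the elementary fact that the $2$-norm of an operator is bounded below by the $2$-norm of its restriction to any invariant subspace. The argument is essentially an exercise in unpacking definitions; no new analytic estimate is needed.

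First I would verify that the hypothesis on $\mathbf{M}$'s spectrum transfers to $\mathbf{M}|_S$, so that ``transient behavior'' of the restriction is even a meaningful notion. Since $S$ is invariant, it is spanned by some subset of the (generalized) eigenvectors of $\mathbf{M}$, so the eigenvalues of $\mathbf{M}|_S$ form a subset of the eigenvalues of $\mathbf{M}$. In particular, if every eigenvalue of $\mathbf{M}$ has strictly negative real part (respectively, modulus in $(0,1)$), the same holds for $\mathbf{M}|_S$.

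Next, suppose $e^{\mathbf{M}|_S t}$ exhibits transient behavior. Then there exists some $t_0>0$ with $\|e^{\mathbf{M}|_S t_0}\|>1$, and hence a unit vector $x\in S$ such that $\|e^{\mathbf{M}|_S t_0} x\| > 1$. Because $x\in S$, the identity $e^{\mathbf{M}t_0}|_S = e^{\mathbf{M}|_S t_0}$ gives $e^{\mathbf{M}t_0} x = e^{\mathbf{M}|_S t_0} x$, so
\[
\| e^{\mathbf{M} t_0} \| \;\ge\; \| e^{\mathbf{M} t_0} x \| \;=\; \| e^{\mathbf{M}|_S t_0} x \| \;>\; 1.
\]
Combined with $\|e^{\mathbf{M}\cdot 0}\| = 1$, this is exactly transient behavior of $e^{\mathbf{M}t}$. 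The matrix powers case is identical after replacing $e^{\mathbf{M}t_0}$ by $\mathbf{M}^{n_0}$ and invoking $\mathbf{M}^n|_S = (\mathbf{M}|_S)^n$, which is a trivial consequence of $S$ being invariant under $\mathbf{M}$.

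The only (minor) subtlety, and the closest thing to an obstacle, is ensuring that the definition of transience from section \ref{sec:prelims:transients} is applied consistently with the $2$-norm so that the baseline $\|\mathbf{I}\|=1$ is used on both $S$ and $\mathbb{C}^N$; once this is settled, the containment of subspaces makes the norm inequality immediate. Thus the theorem reduces to two one-line observations (spectral inclusion and restriction-dominance of norms) stitched together by the standard functional-calculus identity.
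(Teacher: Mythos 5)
Your proof is correct and follows essentially the same route as the paper's: both use the identity $e^{\mathbf{M}t}\big|_S = e^{\mathbf{M}|_S t}$ and the fact that the norm of an operator dominates the norm of its restriction to an invariant subspace, with your explicit choice of a maximizing unit vector $x\in S$ being just a rephrasing of the paper's chain of maxima. Your added remark on spectral inclusion is a harmless (and sensible) extra check that the paper leaves implicit.
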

\begin{proof}
Transient effects in $e^{\mathbf{M}t}$ require $\|e^{\mathbf{M}t}\|>1$ for some $t>0$. Because $\left.\mathbf{M}\right|_S$ admits transient behavior, we know that $1<\| e^{\left.\mathbf{M}\right|_St} \|$ for some $t>0$. Then, using the definition of the matrix norm,
\[
1 < \left\| e^{\left. \mathbf{M} \right|_S t} \right\| = \max_{\|x\|=1} \left\| e^{\left. \mathbf{M} \right|_S t}x \right\| = \max_{x\in S, \|x\|=1} \left\| e^{\mathbf{M}t}x \right\| \le \max_{\|x\|=1} \left\| e^{\mathbf{M}t}x \right\| = \left\| e^{\mathbf{M}t} \right\|.
\]
Thus, if $\left.\mathbf{M}\right|_S$ admits transient behavior in the matrix exponential, so must $\mathbf{M}$. Similar logic is used to show that if $\left( \left. \mathbf{M} \right|_S  \right)^n$ has transient effects, then $\mathbf{M}^n$ will as well.
\end{proof}

Using Thm.\ \ref{thm:subspaces}, Lemma \ref{lemma:2by2:theta-exp} proves the second condition of Thm.\ \ref{thm:transient-exp} and Lemmas \ref{lemma:2by2:defective-power} and \ref{lemma:2by2:theta-power} prove the first and second conditions of Thm.\ \ref{thm:transient-power}, respectively. Lemma \ref{lemma:2by2:defective-exp} shows the first condition of Thm.\ \ref{thm:transient-exp} for 2-dimensional Jordan subspaces. \cite{haagerup-371-1992, wu-351-1998} allow us to consider Jordan subspaces of arbitrary dimension because the numerical range is analytically known for these cases.

\begin{thm}[Numerical Range of a Jordan Block \cite{haagerup-371-1992, wu-351-1998}]
Let $\mathbf{J}$ be a $M\times M$ Jordan block ($M\ge 2$) with eigenvalue $\lambda$. Then $W(\mathbf{J})$ is all points $z\in\mathbb{C}$ satisfying
\[
\left| z - \lambda \right| \le \cos\left( \frac{\pi}{M+1} \right).
\]
Consequently,
\[
\omega(\mathbf{J})= \mathrm{Re}(\lambda) + \cos\left( \frac{\pi}{M+1} \right).
\]
\label{thm:nr-jordan}
\end{thm}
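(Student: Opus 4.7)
The plan is to reduce the problem to the nilpotent shift and exploit symmetry. Write $\mathbf{J} = \lambda \mathbf{I} + \mathbf{N}$, where $\mathbf{N}$ is the $M\times M$ nilpotent Jordan block (ones on the superdiagonal, zeros elsewhere). Since $W(\alpha \mathbf{I} + \mathbf{B}) = \alpha + W(\mathbf{B})$ for any scalar $\alpha$ and matrix $\mathbf{B}$ (immediate from the definition in Eq.\ \eqref{eq:numerical-range}), it suffices to prove $W(\mathbf{N})$ is the closed disk of radius $\cos(\pi/(M+1))$ centered at the origin.

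First I would establish that $W(\mathbf{N})$ is rotationally symmetric about $0$, which forces it to be a disk. For any $\varphi\in\mathbb{R}$, let $\mathbf{U}_\varphi = \mathrm{diag}(1,e^{i\varphi},e^{2i\varphi},\ldots,e^{(M-1)i\varphi})$, a unitary matrix. A direct computation gives $\mathbf{U}_\varphi^{-1} \mathbf{N} \mathbf{U}_\varphi = e^{i\varphi}\mathbf{N}$. The numerical range is invariant under unitary similarity and satisfies $W(c\mathbf{B}) = cW(\mathbf{B})$, so $W(\mathbf{N}) = e^{i\varphi} W(\mathbf{N})$ for every $\varphi$. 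Combined with convexity and compactness (standard properties of $W(\cdot)$), $W(\mathbf{N})$ must be a closed disk centered at $0$; let $r$ denote its radius.

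Next I would compute $r = \max\{\mathrm{Re}(x\cdot \mathbf{N} x) : \|x\|=1\}$, which by rotational symmetry equals the radius. The real part of the quadratic form $x\cdot \mathbf{N} x$ is $x\cdot \mathbf{H} x$ where $\mathbf{H} = (\mathbf{N}+\mathbf{N}^*)/2$ is Hermitian and tridiagonal, with zeros on the main diagonal and $1/2$ on the sub- and super-diagonals. The maximum of $x\cdot \mathbf{H} x$ over unit vectors is the largest eigenvalue of $\mathbf{H}$.

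The main obstacle is the explicit diagonalization of $\mathbf{H}$. This is a classical computation: one solves the recurrence $v_{k-1} + v_{k+1} = 2\mu v_k$ with boundary conditions $v_0 = v_{M+1} = 0$ via the ansatz $v_k = \sin(k\alpha)$, yielding eigenvalues $\mu_k = \cos(k\pi/(M+1))$ for $k=1,\ldots,M$. The largest is $\cos(\pi/(M+1))$, which I would identify with $r$. This gives $W(\mathbf{N}) = \{z : |z|\le \cos(\pi/(M+1))\}$, and translating back by $\lambda$ yields the stated form of $W(\mathbf{J})$. The numerical abscissa formula then follows by noting that the rightmost point of a closed disk centered at $\lambda$ with radius $\cos(\pi/(M+1))$ has real part $\mathrm{Re}(\lambda) + \cos(\pi/(M+1))$.
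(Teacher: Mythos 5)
Your proposal is correct, but it cannot be compared against a proof in the paper because the paper offers none: Thm.~\ref{thm:nr-jordan} is imported verbatim from the literature (\cite{haagerup-371-1992, wu-351-1998}) and used as a black box. What you have written is a clean, self-contained, elementary derivation, and every step checks out. The translation $W(\lambda\mathbf{I}+\mathbf{N})=\lambda+W(\mathbf{N})$ is immediate from Eq.~\eqref{eq:numerical-range}; the conjugation identity $\mathbf{U}_\varphi^{-1}\mathbf{N}\mathbf{U}_\varphi=e^{i\varphi}\mathbf{N}$ is verified entrywise, and together with unitary invariance, the homogeneity $W(c\mathbf{B})=cW(\mathbf{B})$, compactness, and convexity it does force $W(\mathbf{N})$ to be a closed disk centered at $0$; the radius of a disk centered at the origin equals its maximal real part, which is the top eigenvalue of the Hermitian part $\tfrac{1}{2}(\mathbf{N}+\mathbf{N}^*)$; and the sine ansatz for the resulting tridiagonal eigenvalue problem correctly yields $\cos\left(\tfrac{\pi}{M+1}\right)$ as the largest eigenvalue. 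This is essentially the standard argument behind the Haagerup--de la Harpe bound specialized to a single Jordan block. What your route buys is that the paper's reliance on external references for this one ingredient could be removed entirely at the cost of half a page; what the citation buys the paper is access to the more general result (the bound for arbitrary nilpotent contractions), which your disk argument does not reproduce but which the paper also does not need.
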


The first condition in Thm.\ \ref{thm:transient-exp} trivially follows from Eq.\ \eqref{eq:exist-transient-exp} and Thm.\ \ref{thm:nr-jordan}. This completes the proofs of Thms.\ \ref{thm:transient-exp} and \ref{thm:transient-power}.

\section{Geometric Interpretations}
\label{sec:geom}
The results in section \ref{sec:2by2} largely came from analyzing a matrix $\mathbf{M}\in\mathbb{C}^{2\times 2}$ in terms of the angle between its eigenvectors (assuming $\mathbf{M}$ is not defective). This alone leads to new geometric interpretations for transient effects in the matrix exponential and matrix powers of $\mathbf{M}$, but two other geometric observations can be made as well.

First regards the numerical detection of a defective eigenvalue. Due to numerical instabilities in computing the eigenvalues of a non-normal matrix \cite{bk:trefethen-2005} --- especially a defective eigenvalue --- it can be very difficult to distinguish (\textit{e.g.})\ two eigenvalues that are close to each other from a defective eigenvalue. A recent discussion of this issue appears in \cite{zeng-798-2016}. Let $S$ be the 2-dimensional invariant subspace of $\mathbf{M}$ corresponding to these two eigenvalues. An examination of $W\left(\left. \mathbf{M}\right|_S \right)$ provides insight into the situation. From Lemma \ref{lemma:axes-a}, the major and minor axes of $W\left( \left. \mathbf{M} \right|_S \right)$ are $2|\lambda_1-\lambda_2|\csc(\theta)$ and $2|\lambda_1-\lambda_2|\cot(\theta)$, respectively. As $\theta\to0^+$, meaning $\left.\mathbf{M}\right|_S$ approaches a defective matrix, $W\left( \left. \mathbf{M} \right|_S \right)$ will tend toward a circle. Examining the shape of the restricted numerical range for the invariant subspace may help determine if it is a Jordan subspace or not.

Second is a relation to antieigenvalue analyses \cite{bk:gustafson-2012}. When $\mathbf{M}$ is a self-adjoint, positive-definite matrix (and thus normal), its antieigenvector is the vector that is rotated the most by $\mathbf{M}$. The antieigenvalue is the corresponding angle of rotation, which will be in $[0,\pi/2)$. If $\mathbf{M}$ is $2\times 2$, self-adjoint, and negative-definite with eigenvalues $\lambda_1$ and $\lambda_2$, its antieigenvalue is \cite{bk:gustafson-2012}
\[
\arcsin\left( \frac{|\lambda_1 - \lambda_2|}{|\lambda_1 + \lambda_2|} \right),
\]
where the angle of rotation is measured in the Hermitian sense of Eq.\ \eqref{eq:vector-angle}. Compare this against the critical angle where $\omega(\mathbf{A})=0$ in Lemma \ref{lemma:2by2:theta-exp}, which is
\[
\arctan\left( \frac{\left| \lambda_1 - \lambda_2 \right|}{2\sqrt{\lambda_1\lambda_2}} \right)
\]
when $\lambda_1,\lambda_2\in\mathbb{R}$ and $\lambda_1,\lambda_2<0$. Some basic trigonometry reveals that these two angles are the same; the critical angle for transient effects in $e^{\mathbf{M}t}$ is exactly the antieigenvalue of a normal matrix with the same real eigenvalues. That is, transient effects appear in $e^{\mathbf{M}t}$ when the normal matrix with the same eigenvalues can rotate vectors more than the angle between the eigenvectors of $\mathbf{M}$. Future work may consider generalizing this connection to matrices with complex eigenvalues, for which an antieigenvalue analysis is not well developed. 

\section{Concluding Remarks}
\label{sec:conclusions}
In this work we have established a relationship (Thm.\ \ref{thm:subspaces}) between the invariant subspaces of a non-normal matrix $\mathbf{M}\in\mathbb{C}^{N\times N}$ and transient behavior in $e^{\mathbf{M}t}$ ($t>0$) and $\mathbf{M}^n$ ($n>0$). This then allowed us to derive sufficient conditions for transient effects in the exponential and powers of $\mathbf{M}$, as stated in Thms.\ \ref{thm:transient-exp} and \ref{thm:transient-power}, respectively. Most of the conditions came from analytic results for 2-dimensional invariant subspaces of $\mathbf{M}$, where the angle between the two associated eigenvectors is a key parameter. We should note that the conditions in Thms.\ \ref{thm:transient-exp} and \ref{thm:transient-power} are sufficient but not necessary for transient effects; it is possible that transient effects can only be associated with higher-dimensional invariant subspaces.

Consequently, one immediate next step is to analyze higher-dimensional invariant subspaces for conditions on transient effects. Although not as straightforward as the case for $2\times 2$ matrices, the numerical range of $3\times 3$ matrices is discussed in \cite{keeler-115-1997}. Their results can be used with Eq.\ \eqref{eq:exist-transient-exp} to develop appropriate conditions. Along these lines, the Jordan principal angles \cite{galantai-589-2006} may be useful when generalizing the angle between eigenvectors to angles between invariant subspaces of arbitrary dimensions.

In a broader sense such investigations are examining structure within $W(\mathbf{M})$ \cite{uhlig-055019-2008, dunkl-2042-2011}. For instance, what is the lowest-dimensional invariant subspace $S$ of $\mathbf{M}$ that satisfies $\omega\left(\left.\mathbf{M}\right|_S\right)>0$, thereby inducing transient behavior? Ideas on the restricted numerical range \cite{das-35-1987, gawron-102204-2010} may help answer this question. Finally, we end with a practical comment. All of our results make use of the Jordan decomposition of a non-normal $\mathbf{M}$, which can be difficult to compute accurately \cite{bk:trefethen-2005}. Although the results are theoretically useful, additional work needs to be performed to strengthen their practical utility, possibly by further exploring connections to the Schur decomposition as in Cor.\ \ref{cor:2by2:a-theta}.

\section*{Acknowledgments}
I thank Jay Bardhan, Thorsten Hansen, Dominik Orlowski, Christopher DeGrendele, and Jonathan Kazakov for helpful conversations. 

\bibliographystyle{elsarticle-num}
\bibliography{refs}

\end{document}